\newenvironment{proof}{\noindent {\bf Proof }}
{\hfill $\square$ \vspace{0.25cm}}
\def\E{{\mathbb E}}
\def\P{{\mathbb P}}
\def\R{{\mathbb R}}
\def\Z{{\mathbb Z}}
\def\N{{\mathbb N}}
\newcommand{\leftexp}[2]{{\vphantom{#2}}^{#1}{#2}}
\newcommand{\G}{\ensuremath{\left}}    
\newcommand{\D}{\ensuremath{\right}}   
\newtheorem{example}{Example}[section]
\newtheorem{lemma}{Lemma}[section]
\newtheorem{obs}{Observation}[section]
\newtheorem{defi}{Definition}[section]
\newtheorem{theo}{Theorem}[section]
\newtheorem{prop}{Proposition}[section]
\newtheorem{coro}{Corollary}[section]
\begin{document}

\begin{frontmatter}
\title{Perfect simulation for locally continuous chains of infinite
  order}
\author{Sandro Gallo}
\ead{sandro@im.ufrj.br}
\address{Instituto de Matem\'atica, Universidade Federal de Rio
  de Janeiro, Brazil}
\author{Nancy L. Garcia\corref{cor2}}
\ead{nancy@ime.unicamp.br}
\address{Instituto de Matem\'atica, Estat\'istica e Computa\c
  c\~ao Cient\'ifica, Universidade Estadual de Campinas, Brazil}
\cortext[cor2]{ Corresponding  author}

 \date{October 05, 2012}


\begin{abstract}
  We establish sufficient conditions for perfect simulation of chains
  of infinite order on a countable alphabet. The new
  assumption, \emph{localized continuity}, is formalized with the help
  of the notion of \emph{context trees}, and includes the traditional
  continuous case, probabilistic context trees and discontinuous
  kernels. Since our assumptions are more refined than uniform
  continuity, our algorithms perfectly simulate continuous chains
  faster than the existing algorithms of the literature. We provide
  several illustrative examples.
\end{abstract}
\begin{keyword} perfect simulation \sep chains of infinite order 
\end{keyword}
\end{frontmatter}

{\it MSC 2010}  : 60G10, 62M09.

\section{Introduction}

The objects of this paper are stationary stochastic chains of infinite
order on a countable alphabet.  {These chains are said to be
  compatible} with a set of transition probabilities (depending on an
unbounded portion of the past) if the later is a regular version of
the conditional expectation of the former. This reflects the idea that
chains of infinite order are usually determined by their conditional
probabilities with respect to the past.  Given a set of transition
probabilities (or, simply \emph{kernels} in the sequel), two natural
questions are {(1) \emph{existence:} does there exist a stationary
  chain compatible with it? And (2)\emph{uniqueness:} if yes, is it
  unique?  {A constructive way to answer positively
    these questions is to provide an algorithm based on the transition
    probabilities which converges a.s. and samples precisely from the
    stationary law of the process compatible with the given
    kernel. This is precisely the focus of
    this paper. }

Perfect simulation for chains of infinite order was first
    done by \cite{comets/fernandez/ferrari/2002} under the continuity
    assumption. They used the fact, observed earlier by
    \cite{kalikow/1990}, that under this assumption, the transition
    probability kernel can be decomposed as a countable mixture of
    Markov kernels. Then, \cite{gallo/2009} obtained a perfect
    simulation algorithm for chains compatible with a class of
    unbounded probabilistic context trees where each infinite size
    branch can be a discontinuity point. Both of them
   use an extended version of the so-called \emph{coupling from the
      past algorithm} (CFTP in the sequel) introduced by
     \cite{propp/wilson/1996} for Markov chains. 
     Recently, \cite{gallo/garcia/2010} proposed a combination between
     these algorithms to cover cases where the kernels are neither
     necessarily continuous nor necessarily probabilistic context
     trees. In the present paper we consider a broader class of
     kernels that includes all the above cases, in fact, all the results of
     the above cited works can be obtained as corollaries of
     the present work. 
     
     Other recent results in the area are the papers of
     \cite{garivier/2011} and \cite{desantis/piccioni/2012}.  The
     former introduced an elegant CFTP algorithm which works without
     the weak non-nullness assumption, designed \emph{\`a la}
     \cite{propp/wilson/1996}. Their work does not intend to exhibit
     explicit sufficient conditions for a CFTP to be feasible and has
     a more algorithmic-motivated approach. The later introduced an
     interesting framework, making use of an a priori knowledge about
     the histories, extracted from the auxiliary sequence of random
     variables used for the simulation. Their general conditions are
     not explicitly given on the kernel, difficulting the comparison
     with our method. Notice, however, that our result is a strict
     generalization of Theorem 4.1 in
     \cite{comets/fernandez/ferrari/2002} whereas the regime of slow
     continuity rate is not present in their paper. To be more
     transparent, we show that all the examples of
     \cite{desantis/piccioni/2012} satisfy our conditions when
     considering the weakly non-null cases.

   Let us emphasize also that discontinuities appear quite
   naturally. Section \ref{sec:examples1} present several examples. On
   the other hand, relaxing the continuity assumption has an interest
   not only from a mathematical point of view, but also from an
   applied point of view. Practitioners generally seek to build models
   which are as general as possible. From data, it is not possible to
   check the rate of decay of the dependence on the past, and
   therefore, we do not know if we have continuity.

One of the main concepts introduced in this paper is the notion of
{\it skeleton} related to a transition probability
kernel. It is the smallest context tree composed by the set of pasts
which have a continuity rate which converges slowly to zero, or even
which does not converge to zero (discontinuity points).  
{This concept is reminiscent} of the
concept of \emph{bad} pasts, meaning the set of discontinuous pasts
for a given two-sided specification, which appears in the framework of
almost-Gibbsianity in the statistical physics literature. We refer to
\cite{vanEnter/aernout/redig/verbitskiy/2008} for a discussion on the
subject.  Almost-Gibbs measures appear in several situations, for
example random walk in random scenery (see for example,
\cite{denHollander/steif/vanderWal/2005} and
\cite{denHollander/steif/2006}), projection of the Ising model on a
layer (\cite{maes/redig/vanMoffaert/leuven/1999}), intermittency
(\cite{maes/redig/takens/vanMoffaert/verbitski/2000}), projection of
Markov measures (\cite{chazottes/ugalde/2011}). From this point of
view,  our work exhibits a large class of
almost-Gibbs measures that can be perfectly simulated.

Our first main result, Theorem \ref{theo2}, deals with locally
continuous chains.  Local continuity corresponds to assume that there
exists a stopping time for the reversed-time process, beyond which the
decay of the dependence on the past occurs uniformly. Theorem
\ref{theo2} states that, if the localized-continuity rate decays fast
enough to zero, we can perfectly simulate the stationary chain by
CFTP. More precisely, according to this rate, we specify several
regimes for the tail distribution of the coalescence time.  Theorem
\ref{theo2.5} presents an interesting extension where we remove the
local continuity assumption.  This means that this later result deals
with chains such that no stopping time can tell whether or not the
past we consider is a continuity point for $P$. Here also, we give
explicit examples, motivating these two theorems.

{It is important to emphasize that these results not only enlarge the class of
  processes which can be perfectly simulated but also they can be
  interpreted as a method to ``speed up'' the perfect simulation
  algorithm proposed by \cite{comets/fernandez/ferrari/2002}. Assume
  that the kernel is such that their algorithm can be performed (that
  is, is continuous, with a sufficiently fast continuity rate), but
  with an infinite expected time, due to some pasts which slow down
  the continuity rate. Our method allows us to include these pasts
  into the set of infinite size contexts of the skeleton.
  Then, depending on the position of these branches (that is,
  depending on the \emph{form} of the skeleton), our
  results show that the perfect simulation might be done in a finite
  expected time.}

{Our perfect simulation algorithm for the
given kernel $P$ requires that the skeleton is itself
perfectly simulable. However, this apparent handicap
is easy to overcome. Sufficient conditions for perfect simulability
can be explicitly obtained for a wide class of skeleton context
trees.} Several examples are presented and used throughout the paper.

It is worth mentioning that \cite{foss/konstantopoulos/2004} showed
that the notion of perfect simulation based on a coupling from the
past is closely related to the almost sure existence of ``renovating
event''. However, the difficulty always lies in finding such event for
each specific problem. In the present case, the perfect simulation
scheme provides such renovating event and gives conditions for almost
sure occurrence in terms of the transition probability kernel.

\vspace{0.2cm}

The paper is organized as follows.  In Section
  \ref{sec:notation} we present the basic definitions, the notation,
  and we introduce the coupling from the past algorithm for perfect
  simulation in a generic way. Section \ref{sec:assumptions}
  introduces the more specific notions of local continuity and
  skeletons, that are fundamental for our approach to
  perfect simulation. Our first main result on perfect simulation
  (Theorem \ref{theo2}) is presented in Section \ref{sec:statements},
  together with the corollaries of existence, uniqueness and
  regeneration scheme which are directly inherited by the constructed
  stationary chain. Discussion of these results and explicit examples
  of application are given in Section \ref{sec:examples1}. The proof
  of Theorem \ref{theo2} is given in Section \ref{sec:prova2}. Section
  \ref{sec:extension} is dedicated to an extension of Theorem
  \ref{theo2}. We finish the paper with some concluding remarks in
  Section \ref{sec:conclu}.

\section{Notation and {basic definitions}}\label{sec:notation}

Let $A$ be a countable alphabet. Given two integers $m\leq n$, we
denote by $a_m^n$ the string $a_m \ldots a_n$ of symbols in $A$. For
any $m\leq n$, the length of the string $a_m^n$ is denoted by
$|a_m^n|$ and defined by $n-m+1$. We will often use the notation
$\emptyset$ which will stand for the empty string, having length
$|\emptyset|=0$. For any $n\in\mathbb{Z}$, we will use the convention
that $a_{n+1}^{n}=\emptyset$, and naturally $|a_{n+1}^{n}|=0$. Given
two strings $v$ and $v'$, we denote by $vv'$ the string of length $|v|
+ |v'| $ obtained by concatenating the two strings. If $v'=\emptyset$,
then $v\emptyset=\emptyset v=v$. The concatenation of strings is also
extended to the case where $v=\ldots a_{-2}a_{-1}$ is a semi-infinite
sequence of symbols. If $n\in\{1,2,\ldots\}$ and $v$ is a finite
string of symbols in $A$, $v^{n}=v\ldots v$ is the concatenation of
$n$ times the string $v$. In the case where $n=0$, $v^{0}$ is the
empty string $\emptyset$. Let
$$
A^{-\mathbb{N}}=A^{\{\ldots,-2,-1\}}\,\,\,\,\,\,\textrm{ and }\,\,\,\,\,\,\,  A^{\star} \,=\, \bigcup_{j=0}^{+\infty}\,A^{\{-j,\dots, -1\}}\, ,
$$
be, respectively, the set of all infinite strings of past symbols and the set of all finite strings of past symbols. 
The case $j=0$ corresponds to the empty string $\emptyset$. Finally, we denote by    $\underline{a}=\ldots a_{-2}a_{-1}$ the elements of $A^{-\mathbb{N}}$.

Along this paper, we will often use the letters $u$, $v$ and $w$ for (finite
or infinite) strings of symbols of $A$, and the letters $i$, $j$, $k$,
$l$, $m$ and $n$ for integers.

\begin{defi}A \emph{transition probability kernel} (or simply \emph{kernel} in the
sequel) on a countable alphabet $A$ is a function
\begin{equation}
\begin{array}{cccc}
P:&A\times A^{-\mathbb{N}}&\rightarrow& [0,1]\\
&(a,\underline{a})&\mapsto&P(a|\underline{a})
\end{array}
\end{equation}
such that $\sum_{a\in A}P(a|\underline{a})=1$ for any $\underline{a}\in A^{-\mathbb{N}}$.
\end{defi}

For any $a\in A$, we define 
\[
\alpha(a):=\inf_{\underline{z}}P(a|\underline{z})\,\,\,\textrm{and} \,\,\,\alpha_{-1}:=\sum_{a\in A}\alpha(a).
\]

\begin{defi}
We say that the kernel $P$ is  \emph{weakly non-null} if 
$\alpha_{-1}>0$.
\end{defi}

Notice that, a given kernel $P$ is Markovian of order $k$ if 
$P(a|\underline{a})=P(a|\underline{b})$ for any $\underline{a}$ and $\underline{b}$ such that
$a_{-k}^{-1}=b_{-k}^{-1}$.

 A given kernel $P$ is
\emph{continuous} (with respect to the product topology) at some point
$\underline{a}$ if $P(a|a_{-k}^{-1}\underline{z})\rightarrow P(a|\underline{a})$
whenever $k$ diverges, for any $\underline{z}$. Continuous kernels are a natural
extension of Markov kernels. 

In this work we will need an equivalent definition of continuity.

\begin{defi}
We say that an infinite sequence of past symbols 
$\underline{b}$ is a \emph{continuity point} (or past) for a given kernel $P$ if the sequence $\{\omega_{k}(\underline{b})\}_{k\ge1}$ defined by
\begin{equation}\label{eq:pointcontinuity}
\omega_{k}(\underline{b}):=\sum_{b\in A}\inf_{\underline{z}}P(b|b_{-k}^{-1}\,\underline{z})\,,\,\,k\ge1,
\end{equation}
converges to $1$, and a discontinuity point otherwise.
 We say that $P$ is (uniformly) \emph{continuous} if the sequence $\{\omega_{k}\}_{k\ge1}$ defined by
\begin{equation}\label{eq:continuity}
\omega_{k}:=\inf_{b_{-k}^{-1}}\sum_{b\in A}\inf_{\underline{z}}P(b|b_{-k}^{-1}\,\underline{z})\,,\,\, k\ge1,
\end{equation}
converges to $1$, and discontinuous otherwise. 
\end{defi}

If the alphabet is
finite, for instance, the set $A^{-\mathbb{N}}$ is compact and
therefore, our uniform continuity is equivalent to asking that every
point is continuous by the Heine-Cantor Theorem. But this is not the
case in general since we are not assuming finiteness of the alphabet.

We now introduce the objects of interest of the present paper, which
are the stationary chains compatible with a given kernel $P$. 

\begin{defi}\label{def:compa} A stationary stochastic chain ${\bf
    X}=(X_{n})_{n\in\Z}$ of law $\mu$ on $A^{\mathbb{Z}}$ is said to
  be \emph{compatible} with a family of transition probabilities $P$
  if the later is a regular version of the conditional probabilities
  of the former, that is
\begin{equation}\label{compa}
\mu(X_{0}=a|X_{-\infty}^{-1}=a_{-\infty}^{-1})=P(a|a_{-\infty}^{-1})
\end{equation}
for every $a\in A$ and $\mu$-a.e. $a_{-\infty}^{-1}$ in $A^{-\mathbb{N}}$.
\end{defi}  

Standard questions, when we consider non-Markovian kernels, are
\begin{description}
\item [Q1.] Does there  exist a stationary chain compatible with $P$? 
\item [Q2.] Is this chain unique?
\end{description}
These questions can be answered using the powerful constructing method of
``perfect simulation \emph{via} coupling from the past''.
 For a stationary stochastic chain, an algorithm of perfect simulation
 aims to construct finite samples distributed according to the
 stationary measure of the chain.  \cite{propp/wilson/1996} introduced
 (in the Markovian case, that is, in the case where $P$ is a
 transition matrix) the coupling from the past (CFTP) algorithm. 
 This class of algorithms uses a sequence of i.i.d. random
 variable ${\bf U} = \{U_i\}_{i \in \Z}$, uniformly distributed in
 $[0,1[$, to construct a sample of the stationary chain.

 From now on, every chain will be constructed as a function of ${\bf U}$ and
 therefore, the only  probability space  $(\Omega,\mathcal{F},\mathbb{P})$ used
 in this paper is the one associated to this sequence of i.i.d. r.v.'s.\\
 
A CFTP algorithm is completely determined by the
 \emph{update function} $F$, with which are constructed a \emph{coalescence time} $\theta$ and a
 \emph{reconstruction function} $\Phi$. 

The update function  $F:A^{-\mathbb{N}}\cup A^{\star}\times[0,1[\rightarrow A$ has the property that for any $\underline{a}\in A^{-\mathbb{N}}$ and for any $a\in A$, $\mathbb{P}(F(\underline{a},U_{0})=a)=P(a|\underline{a})$.
Define the iterations of $F$ by
\begin{equation}\label{eq:F}
F_{[k,l]}(\underline{a},U_{k}^{l})=F\left(\underline{a}F_{[k,k]}(\underline{a},U_{k})F_{[k,k+1]}(\underline{a},U_{k}^{k+1})\ldots F_{[k,l-1]}(\underline{a},U_{k}^{l-1}),U_{l}\right),
\end{equation}
for any $-\infty<k\leq l\leq+\infty$,  where $F_{[k,k]}(\underline{a},U_{k})=F(\underline{a},U_{k})$. Based on these iterations, we define, for any window $\{m,\ldots,n\}$, $-\infty<m\leq n\leq+\infty$, its coalescence time as
\begin{equation}\label{eq:theta}
\theta[m,n]:=\max\{j\leq m:F_{[j,n]}(\underline{a},U_{j}^{n})\,\,\,\textrm{does not depend on }\,\underline{a}\},
\end{equation}
with $\theta[n]:=\theta[n,n]$. Finally, the reconstruction function of time $i$ is defined by
\begin{equation}\label{eq:Phi}
[\Phi({\bf U})]_{i}=F_{[\theta[i],i]}(\underline{a},U_{\theta[i]}^{i}).
\end{equation}

Given a kernel $P$, if we can find an $F$ such that $\theta[m,n]$ is
a.s. finite for any $-\infty<m\leq n<+\infty$, then, the reconstructed
sample $[\Phi({\bf U})]_{i}$, $i=\theta[m,n],\ldots,n$ is distributed
according to the unique stationary measure. A well-known consequence
of this constructive argument is that there exists a unique stationary
chain compatible with $P$, answering questions Q1 and Q2 at the
same time (see Corollary \ref{coro4} below). Observe that the choice
of the function $F$ is crucial in this approach, a ``bad'' choice
could lead to a coalescence time which is not a.s. finite or having
heavy tail distribution with no finite expectation.  But this choice
depends on the kernel, and in
particular, according to the assumptions made on the kernel, we might guarantee that
there exists a $F$ for which $\theta[m,n]$ is a.s. finite. Another
important observation is that, \emph{a priori}, such algorithms are
not practical in the sense that, at each steps, it requires that we
generate \emph{all} the pasts $\underline{a}$. For this reason, a
particular update function based on a \emph{length function} (see
\eqref{eq:Lfunction}) will be defined, allowing to decide, for some
pasts $\underline{a}$ and some values of $U_{0}$, what is the value of
$F(\underline{a},U_{0})$ looking only at a finite portion of
$\underline{a}$.

\section{Local continuity and good skeleton} \label{sec:assumptions}

\begin{defi}\label{def:contexttrees}

\begin{itemize}
\item A \emph{context tree} on a given alphabet $A$ is a subset of $A^{-\mathbb{N}}\cup A^{\star}$ which forms a partition of $A^{-\mathbb{N}}$ and for which if $v\in\tau$, then $uv\notin\tau$ for any $u\in A^{-\mathbb{N}}\cup A^{\star}$. 
\item
For any context tree $\tau$, we denote by $\leftexp{<\infty}{\tau}$ the set of contexts of $\tau$ having finite lengths and by $\leftexp{\infty}{\tau}$ the remaining contexts. Clearly, these subsets form a partition of $\tau$.
\item For any past $\underline{a}\in A^{-\mathbb{N}}$ we denote by $c_{\tau}(\underline{a})$ the unique element of $\tau$ which is suffix of $\underline{a}$.
\end{itemize}
\end{defi}

For our purposes, a particular class of context trees on $A$ will be of interest.

\begin{defi}
A context tree is a \emph{skeleton context tree} (or simply \emph{skeleton)} if it is the smallest context tree containing the set of its infinite length contexts. We also consider $\emptyset$ to be a skeleton.
\end{defi}
In order to illustrate this notion, let us give one simple example, on
$A=\{-1,+1\}$. Let
\begin{equation}\label{arvore}
\underline{\tau}:=\{1,1(-1),1(-1)(-1),1(-1)(-1)(-1),\ldots\}\cup\{-\underline{1}\}.
\end{equation}
and
\begin{eqnarray}
  \tau_g& :=& \{11,(-1)1,11(-1),(-1)1(-1),11(-1)(-1),(-1)1(-1)(-1), \label{arvore1} \\
 && \quad
 11(-1)(-1)(-1),(-1)1(-1)(-1)(-1)\ldots\}\cup\{-\underline{1}\}. \nonumber
\end{eqnarray}
Observe that $\underline{\tau}$ and $\tau_g$ are indeed context
trees (i.e. satisfy the requirements of the first item of Definition
\ref{def:contexttrees}). However, the only infinite length context in
both trees is $-\underline{1}$, and it is easy to see that
$\underline{\tau}$ is the smallest context tree having this unique
infinite length context. Therefore, $\underline{\tau}$ is a skeleton
whereas $\tau_g$ is not. We will come back several times to this
skeleton along the paper.

The reason why we introduced skeletons is that they give us a nice way to formalize the notion of localized continuity, which extend the continuity
assumption. In Section \ref{sec:examples1} 
this notion is explained by  mean of examples.

\begin{defi}\label{def:LC}
  A kernel $P$ belongs to the class of \emph{locally continuous
    kernels with respect to the skeleton $\tau$} if for
  any $v\in\leftexp{<\infty}{\tau}$, the sequence $\{\alpha_{k}^{v}\}_{k\ge0}$ defined by
\begin{equation}\label{condition1}
\alpha_{k}^{v}:=\inf_{a_{-k}^{-1}\in A^{k}}\sum_{a\in A}\inf_{\underline{z}}P(a|v\,a_{-k}^{-1}\,\underline{z})\,,\,\,k\ge0
\end{equation}
converges to $1$.
We will denote this class by   \emph{LC($\tau$)}. The \emph{probabilistic skeleton (p.s.) of $P$}  is the pair $(\tau,p)$ where $p:=\{p(a|v)\}_{a\in A,\,v\in\leftexp{<\infty}{\tau}}$,
\begin{equation}\label{eq:p}
p(a|v):=\inf_{\underline{z}}P(a|v\underline{z})
\end{equation}
and $p(a|v)=P(a|v)$ for any $v\in \leftexp{\infty}{\tau}$.
\end{defi}

Some observations on the above definitions.

\begin{obs}
\emph{
\begin{enumerate}
\item If $P$ is LC($\tau$) then,  all pasts $\underline{a}$ such that $|c_{\tau}(a)|<+\infty$ are continuity point for $P$. On the other hand, we 
require nothing on the points $\underline{a}$ such that $|c_{\tau}(\underline{a})|=\infty$.  In practice, we will see later that these will be the pasts with slow continuity rate (or even the discontinuous pasts).
\item Observe that for any  fixed $v\in\leftexp{<\infty}{\tau}$,
  $\{p(a|v)\}_{a\in A}$ needs not to be a probability distribution on $A$. 
\end{enumerate}
}
\end{obs}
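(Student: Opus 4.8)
The plan is to handle the two items separately, as only the first is substantive. For item~1 fix a past $\underline{a}$ with $|c_{\tau}(\underline{a})|<+\infty$ and set $v:=c_{\tau}(\underline{a})$ and $j:=|v|$. Since $v$ is by definition the suffix of $\underline{a}$ belonging to $\tau$ and it has finite length, we have $v\in\leftexp{<\infty}{\tau}$ and the $j$ most recent symbols of $\underline{a}$ are exactly $v$. I would then prove that the sequence $\{\omega_{k}(\underline{a})\}_{k\ge1}$ of \reff{eq:pointcontinuity} tends to $1$ by squeezing it between $\alpha^{v}_{k-j}$ and $1$.

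The core step is a cylinder identity together with the remark that a particular value dominates an infimum. For every $k\ge j$, the $k$ most recent symbols of $\underline{a}$ split into the context $v$ followed, at deeper positions, by the block $a_{-k}^{-j-1}$ of length $k-j$; hence the set of pasts over which the infimum in $\omega_{k}(\underline{a})$ is taken is exactly $\{v\,a_{-k}^{-j-1}\,\underline{z}:\underline{z}\in A^{-\mathbb{N}}\}$, so that
\begin{equation}
\omega_{k}(\underline{a})=\sum_{b\in A}\inf_{\underline{z}}P(b\,|\,v\,a_{-k}^{-j-1}\,\underline{z}).
\end{equation}
Because $a_{-k}^{-j-1}$ is one admissible string of length $k-j$ in the infimum defining $\alpha^{v}_{k-j}$ in \reff{condition1}, the right-hand side is at least $\alpha^{v}_{k-j}$; on the other hand $\omega_{k}(\underline{a})\le\sum_{b}P(b\,|\,v\,a_{-k}^{-j-1}\,\underline{z}_{0})=1$ for any fixed $\underline{z}_{0}$. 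Thus $\alpha^{v}_{k-j}\le\omega_{k}(\underline{a})\le1$ for all $k\ge j$, and since $P$ is LC($\tau$) we have $\alpha^{v}_{m}\to1$ as $m\to\infty$; letting $k\to\infty$ (so $m=k-j\to\infty$) forces $\omega_{k}(\underline{a})\to1$, i.e. $\underline{a}$ is a continuity point. The second half of item~1 requires no argument: pasts with $|c_{\tau}(\underline{a})|=\infty$ are simply not constrained by the hypothesis.

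For item~2 I would only record the elementary computation. By \reff{eq:p}, and noting that for $k=0$ the conditioning block in \reff{condition1} is empty, one has $\sum_{a\in A}p(a|v)=\sum_{a\in A}\inf_{\underline{z}}P(a|v\underline{z})=\alpha^{v}_{0}$, and interchanging the sum with the infimum yields $\sum_{a}\inf_{\underline{z}}P(a|v\underline{z})\le\inf_{\underline{z}}\sum_{a}P(a|v\underline{z})=1$. Since this inequality is strict as soon as the infima over $\underline{z}$ are attained (or approached) at different deep pasts, it suffices to exhibit a two-symbol kernel depending genuinely on $\underline{z}$ to conclude that $\{p(a|v)\}_{a\in A}$ is in general only a sub-probability.

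The main obstacle I anticipate is not conceptual but a matter of careful bookkeeping: one must keep the recent/deep reading of the concatenations $a_{-k}^{-1}\underline{z}$ and $v\,a_{-k}^{-1}\,\underline{z}$ consistent, so that the block lying deeper than the context $v$ inside $\underline{a}$ is correctly recognised as a legitimate competitor in the infimum defining $\alpha^{v}_{k-j}$, and that the index shift by $j=|v|$ is tracked throughout. Once the cylinder identity and the ``a particular value dominates the infimum'' step are set up correctly, the convergence is immediate from the LC hypothesis.
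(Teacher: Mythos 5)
Your argument is correct and is exactly the reasoning the paper leaves implicit: the Observation is stated without proof, and the squeeze $\alpha^{v}_{k-j}\le\omega_{k}(\underline{a})\le 1$ for $k\ge j=|c_{\tau}(\underline{a})|$, obtained by recognising $a_{-k}^{-j-1}$ as one competitor in the infimum of \reff{condition1}, is the intended justification of item~1, while item~2 is just sub-additivity of the infimum, $\sum_{a}\inf_{\underline{z}}P(a|v\underline{z})\le\inf_{\underline{z}}\sum_{a}P(a|v\underline{z})=1$. The only (minor) thing missing is an explicit witness for the ``needs not'' in item~2, but any of the paper's examples with $\alpha_{0}^{v}<1$ (e.g.\ the kernels of Section~\ref{sec:ex}) supplies one.
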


Our first main assumption for our results will be that $P$ is a  probability kernel on $A=\{1,2,\ldots\}$ being locally continuous with some p.s. $(\tau,p)$. We will furthermore require that this p.s. is ``good'' in a sense we explain now.  We first introduce 
sequences of random variables which are obtained as coordinatewise  functions of the sequence ${\bf U}$. The first sequence, ${\bf Y}$ is defined as follows, for any $i\in\mathbb{Z}$
\begin{equation}
Y_{i}=\sum_{a\in A}a.{\bf 1}\left\{\sum_{j=0}^{a-1}\alpha(j)\leq U_{i}<\sum_{j=0}^{a}\alpha(j)\right\}+\star.{\bf 1}\{U_{i}\ge\alpha_{-1}\}
\end{equation}
where $\alpha(0):=0$.
 For any ${\bf
  a}=a_{-\infty}^{+\infty}\in A^{\mathbb{Z}}$, we also define the
sequence of r.v.'s 
${\bf Y}({\bf a})$ where for any $i\in\mathbb{Z}$,
$$Y_{i}({\bf a}):=Y_{i}.{\bf 1}\{Y_{i}\in A\}+a_{i}.{\bf 1}\{Y_{i}=\star\}.$$
Finally, we define the sequence $\{c_{\tau}^{n}\}_{n\in\mathbb{Z}}$ of maximum context
length (based on ${\bf Y}$) as
\begin{equation}\label{eq:ctau}
c_{\tau}^{n}:=\sup_{{\bf a}}|c_{\tau}(Y_{-\infty}^{n}({\bf a}))|\,,\,\,\,n\in\mathbb{Z},
\end{equation}
where the notation $c_{\tau}(\cdot)$ was introduced in Definition \ref{def:contexttrees}. Observe that the event $\{c_{\tau}^{n}\leq k\}$ is
$\mathcal{F}(Y_{n-k+1}^{n})$-measurable.  
\begin{defi}\label{def:good}
Any time belonging to the set
\begin{equation}\label{eq:good1}
\{i\leq m:Y_{j}\in A\,\,\,\textrm{or}\,\,\,\,c_{\tau}^{j}\le j-i\,,\,\,\,j=i,\ldots,m\}
\end{equation}
is called a \emph{good coalescence time for time $m$}. We say that $(\tau,p)$ is a \emph{good probability skeleton} if $\bar{\theta}[0]$, the supremum over the set \eqref{eq:good1} when $m=0$, has finite expectation. 
\end{defi}

\begin{obs}
\emph{
\begin{enumerate}
\item Any good coalescence time is measurable with
respect to $\mathcal{F}(Y_{-\infty}^{m})$ (less information than
$U_{-\infty}^{m}$).
\item Assuming that $(\tau,p)$ is a good p.s. of
$P$ implies that there exists a set $A(\tau) \subset A$ such that  
$\inf_{\underline{z}}P(a|\underline{z})>0$ for all $a \in
A(\tau)$. In other words, this means that we assume \emph{weak non-nullness} for $P$. 
\item Observe that, under the assumption that $\tau$ is a good p.s., we have $c^{j}_{\tau}<+\infty, \mathbb{P}$-a.s. for any $j\in\mathbb{Z}$.
\end{enumerate}}
\end{obs}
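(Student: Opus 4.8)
I would treat the three assertions in turn, the first being routine and the last two resting on a single mechanism. For the measurability claim, unfold the defining set \eqref{eq:good1}: a time $i$ belongs to it (for the window $\{i,\dots,m\}$) precisely when, for every $j\in\{i,\dots,m\}$, either $Y_{j}\in A$ or $c_{\tau}^{j}\le j-i$. The event $\{Y_{j}\in A\}$ is $\mathcal{F}(Y_{j})$-measurable, and by the measurability remark following \eqref{eq:ctau} the event $\{c_{\tau}^{j}\le j-i\}$ is $\mathcal{F}(Y_{i+1}^{j})$-measurable; both lie in $\mathcal{F}(Y_{i}^{m})\subset\mathcal{F}(Y_{-\infty}^{m})$. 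Hence the indicator that $i$ lies in \eqref{eq:good1} is $\mathcal{F}(Y_{-\infty}^{m})$-measurable, and $\bar{\theta}[m]$, a countable supremum of such times, is $\mathcal{F}(Y_{-\infty}^{m})$-measurable as well; since each $Y_{i}$ is a function of $U_{i}$ alone, this is coarser than $\mathcal{F}(U_{-\infty}^{m})$.

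The core of the remaining two claims is an elementary observation that I would isolate first: on the event $\{Y_{0}=\star\}\cap\{c_{\tau}^{0}=+\infty\}$, the set \eqref{eq:good1} with $m=0$ is empty, so $\bar{\theta}[0]=-\infty$ there. Indeed, for any candidate $i\le 0$ the term $j=0$ demands $Y_{0}\in A$ or $c_{\tau}^{0}\le -i$, and both fail. Since a good probability skeleton forces $\E[-\bar{\theta}[0]]<\infty$, hence $\bar{\theta}[0]>-\infty$ almost surely, this event must be $\P$-null.

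From this I would deduce the third claim. By stationarity of ${\bf Y}$ it suffices to prove $c_{\tau}^{0}<+\infty$ a.s., so suppose for contradiction that $\P(c_{\tau}^{0}=+\infty)=q>0$. Freeing the coordinate at time $0$ only enlarges the supremum in \eqref{eq:ctau}, so the $\mathcal{F}(Y_{-\infty}^{-1})$-measurable event $\tilde D:=\{\sup_{\mathbf a}|c_{\tau}(Y_{-\infty}^{-1}(\mathbf a)\,a_{0})|=+\infty\}$ contains $\{c_{\tau}^{0}=+\infty\}$, whence $\P(\tilde D)\ge q$; moreover $c_{\tau}^{0}=+\infty$ holds throughout $\tilde D\cap\{Y_{0}=\star\}$. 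As $Y_{0}$ is independent of $Y_{-\infty}^{-1}$ and $\P(Y_{0}=\star)=1-\alpha_{-1}$, independence yields $\P(\{c_{\tau}^{0}=+\infty\}\cap\{Y_{0}=\star\})\ge q(1-\alpha_{-1})>0$, contradicting the isolated observation; thus $q=0$. (The boundary case $\alpha_{-1}=1$ is a product kernel with trivial skeleton, where $c_{\tau}^{0}\equiv 0$.)

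The weak non-nullness claim is then the degenerate instance of the same mechanism. If $\alpha_{-1}=0$, then $Y_{i}=\star$ for all $i$ almost surely, so $Y_{i}(\mathbf a)=a_{i}$ is free and, since the skeleton carries an infinite-length context ($\leftexp{\infty}{\tau}\ne\emptyset$), $c_{\tau}^{0}=+\infty$ a.s.; thus $\{Y_{0}=\star\}\cap\{c_{\tau}^{0}=+\infty\}$ has full probability and $\bar{\theta}[0]=-\infty$ a.s., contradicting the good-skeleton hypothesis. Hence $\alpha_{-1}>0$ and one takes $A(\tau)=\{a\in A:\alpha(a)>0\}\neq\emptyset$. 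I expect the only genuine obstacle to be the escape route $Y_{j}\in A$ in \eqref{eq:good1}, which by itself does not bound $c_{\tau}^{j}$; conditioning on $\{Y_{0}=\star\}$ and exploiting the independence of $Y_{0}$ from the past is exactly what neutralizes it, and it is the one point where the good-skeleton hypothesis is used in an essential way.
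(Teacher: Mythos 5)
The paper states this Observation without proof, so there is no in-text argument to compare yours against; judged on its own terms, your proposal is correct and follows the route the authors clearly have in mind. The measurability of the set \eqref{eq:good1}, and hence of $\bar{\theta}[m]$, follows exactly as you say from the remark after \eqref{eq:ctau}, and your reduction of items 2 and 3 to the single event $\{Y_{0}=\star\}\cap\{c_{\tau}^{0}=+\infty\}$ --- which must be $\mathbb{P}$-null because on it the set \eqref{eq:good1} is empty and $\bar{\theta}[0]=-\infty$ --- combined with the independence of $Y_{0}$ from $\mathcal{F}(Y_{-\infty}^{-1})$ via the event $\tilde D$, is precisely the mechanism that makes the goodness hypothesis bite; the factorization $\mathbb{P}(\tilde D)\,(1-\alpha_{-1})=0$ settles both claims at once. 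Two boundary cases are dispatched too quickly, though neither is one the authors intend to cover. For item 2 your argument genuinely needs $\leftexp{\infty}{\tau}\neq\emptyset$: for $\tau=\emptyset$ one has $c_{\tau}^{j}\equiv 0$, every $i\le 0$ is a good coalescence time, and the implication is simply false --- which is consistent with the paper listing weak non-nullness as a \emph{separate} hypothesis in the $\tau=\emptyset$ discussion of Section \ref{sec:specificskeletons}. For item 3 your parenthetical about $\alpha_{-1}=1$ is wrong as stated: nothing forces the skeleton to be trivial for a product kernel, and pairing such a kernel with $\tau^{\sigma}$ where $\alpha(1)<\sigma$ gives $\bar{\theta}[0]=0$ (hence a good p.s.) while $c_{\tau}^{0}=+\infty$ with positive probability, so in that degenerate corner the Observation itself fails rather than your proof. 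Your main argument, which settles everything when $0<\alpha_{-1}<1$ and $\leftexp{\infty}{\tau}\neq\emptyset$, is sound.
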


\section{Main result and direct consequences} \label{sec:statements}

We will say that a non-negative sequence $\{c_{n}\}_{n\in\mathbb{N}}$
decays exponentially fast to zero if there exist a constant $D>0$ and
a real number $0<d<1$ such that $c_{n}\leq Dd^{n}$ for any $n$. We say
that a real-valued random variable $W$ has exponential tail if
$\{\mathbb{P}(|W|\geq n)\}_{n\in\mathbb{N}}$ decays exponentially fast
to zero, and summable tail if $\{\mathbb{P}(|W|\geq
n)\}_{n\in\mathbb{N}}$ is summable.

For any skeleton $\tau$, let 
 \[
 N(\tau):=\{i\ge1:\exists v\in\tau, \,|v|=i\}.
 \]
 In the sequel, one of the main characteristics of a kernel in
 LC($\tau$) will be the sequence of sequences
 $\{\{\alpha_{k}^{i}\}_{k\ge-1}\}_{i\in N(\tau)}$, defined as  
\begin{equation}\label{eq:alphaki}
\alpha_{k}^{i}:=\inf_{v\in\leftexp{\leq i}{\tau}}\alpha_{k}^{v},
\end{equation}
where $\leftexp{\leq i}{\tau}$ denotes the set of contexts in $\tau$ having length smaller or equal to $i$. Observe that there is a notational similarity between the case where  the exponent  is an
integer $i$ and the case where it is  an element $v$ of $\tau$.

\begin{theo}\label{theo2}
  Consider a kernel $P$ belonging to LC($\tau$) and assume that its
  probabilistic skeleton $(\tau,p)$ is good with good coalescence
  times $\bar{\theta}[0]$ for time $0$. Let $A_{0}:=\alpha_{-1}$ and for any $k\ge1$, denote
$$A_{k}:=\left\{1-(\mathbb{E}|\bar{\theta}[0]|+1)\mathbb{P}(U_{0}>\alpha_{k}^{c_{\tau}^{-1}})\right\}\vee \alpha_{-1}.$$
Then, we can construct for $P$, an update
  function $F$ and a corresponding coalescence time $\theta$ such
  that 
\begin{enumerate}[(i)]
\item  If $\sum_{k\geq1}\prod_{j=0}^{k-1}A_{k}=+\infty$, then $\theta[0]$ is $\mathbb{P}$-a.s. finite.
\item If  $\sum_{k\ge0}(1-A_{k})<+\infty$, then $\theta[0]$ has summable tail.
\item If $\bar{\theta}[0]$ has exponential tail and $\{1-A_{k}\}_{k\geq0}$ decays exponentially fast to zero, then $\theta[0]$ has exponential tail.
\end{enumerate}
In particular, in each of these regimes, the CFTP with update function $F$ is feasible.
\end{theo}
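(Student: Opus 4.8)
```latex
\textbf{Proof proposal.}

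The plan is to construct an explicit update function $F$ based on a
\emph{length function}, and then to compare its coalescence time $\theta[0]$
with the good coalescence time $\bar\theta[0]$ of the probabilistic
skeleton. The key idea is that coalescence for $P$ happens as soon as two
\emph{independent} mechanisms succeed: either the skeleton itself coalesces
(which is controlled by $\bar\theta[0]$ and the variables ${\bf Y}$), or the
localized-continuity rate $\alpha_k^v$ allows us to decide the value of
$F(\underline a, U_0)$ by reading only a finite suffix of $\underline a$.
First I would define, for each past and each value of the uniform variable,
a length function $L$ telling how far into the past one must look; the
weak non-nullness (guaranteed by goodness, via Observation above) ensures
that with probability $\alpha_{-1}=A_0$ the symbol is determined with zero
lookback. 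Then I would show that $F$ is a genuine update function, i.e.
$\mathbb P(F(\underline a,U_0)=a)=P(a|\underline a)$, which is a routine
consequence of the definition of $\alpha_k^v$ and $p(a|v)$.

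The heart of the argument is a renewal/coupling estimate for $\theta[0]$.
Condition on the good coalescence time $\bar\theta[0]$ and on the maximal
context length $c_\tau^{-1}$, both of which are
$\mathcal F({\bf Y})$-measurable and a.s.\ finite under goodness. Conditionally,
at each step the probability that the local-continuity mechanism \emph{fails}
to force coalescence is exactly $\mathbb P(U_0>\alpha_k^{c_\tau^{-1}})$, where
$k$ is the number of steps already survived; the factor $(\mathbb
E|\bar\theta[0]|+1)$ in the definition of $A_k$ accounts for the cost of
simulating the skeleton back to its own coalescence. I would package this into
a bound of the form $\mathbb P(\theta[0]<-n)\le\prod_{j=0}^{n-1}A_j$ (up to the
$\vee\,\alpha_{-1}$ floor coming from the always-available non-null symbols).
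Granting such a bound, the three conclusions are standard: part (i) is the
Borel--Cantelli / second-moment dichotomy showing $\theta[0]$ is a.s.\ finite
precisely when $\sum_k\prod_{j<k}A_j$ diverges; part (ii) is immediate since
$\sum_k(1-A_k)<\infty$ forces $\prod_j A_j$ to have a positive limit along a
summable complement, giving summable tail for $\theta[0]$; and part (iii)
combines the exponential decay of $1-A_k$ with the exponential tail of
$\bar\theta[0]$ to control the tail of $\theta[0]$ via a convolution-type
estimate.

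The main obstacle, I expect, is making the conditioning rigorous: the number
of steps $k$ one has ``survived'' and the relevant context length
$c_\tau^{-1}$ are themselves random and entangled with the same sequence
${\bf U}$ that drives the $U_0>\alpha_k^{c_\tau^{-1}}$ events, so one must
carefully separate the information carried by ${\bf Y}$ (which governs the
skeleton's coalescence) from the residual randomness in ${\bf U}$ that governs
the continuity mechanism. The clean way to do this is to exploit that every
good coalescence time is $\mathcal F(Y_{-\infty}^m)$-measurable (first
Observation after Definition \ref{def:good}), so that conditionally on ${\bf
Y}$ the failure events at successive steps become independent with the stated
probabilities, after which the product bound and hence the three regimes
follow. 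The last sentence, that the CFTP with this $F$ is feasible, is then a
direct consequence of $\theta[0]$ being a.s.\ finite in each regime.
```
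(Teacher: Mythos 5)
Your overall shape is right (length-function update rule, comparison with the skeleton's good coalescence time, a renewal-type estimate), but the proposal misses the central device that makes the argument work, and the estimate you propose to ``package'' is not of the correct form. First, the bound $\mathbb P(\theta[0]<-n)\le\prod_{j=0}^{n-1}A_j$ cannot be the engine of the proof: in the interesting regimes $\prod_j A_j$ converges to a \emph{positive} constant (that is exactly what $\sum_k(1-A_k)<\infty$ gives), so such a bound says nothing about $\theta[0]$ being finite. What is actually needed is the house-of-cards/renewal criterion of \cite{comets/fernandez/ferrari/2002}: the probability that a given time is a coalescence time is bounded \emph{below} by a product, and a.s.\ finiteness of $\theta[0]$ follows from recurrence of the associated renewal structure, i.e.\ from divergence of $\sum_k\prod_{j<k}$. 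Second, your way out of the dependence problem --- ``conditionally on ${\bf Y}$ the failure events become independent with the stated probabilities'' --- does not hold: $U_0$ determines $Y_0$, so conditioning on ${\bf Y}$ distorts the law of $U_0$, and $\alpha_k^{c_\tau^{-1}}$ is itself ${\bf Y}$-measurable. The paper resolves this differently, by \emph{block rescaling}: it iterates the good coalescence time to get blocks $B_k=\{\theta^k,\ldots,\theta^{k-1}-1\}$, shows that the block maxima $L_k=\sup_{l\in B_k}\zeta_l$ are genuinely i.i.d.\ (because within $B_k$ one has $c_\tau^{j-1}\le j-\theta^k$, so $\zeta_l$ is a function of $\{U_j\}_{j\in B_k}$ alone), and applies the renewal criterion at the block level to a block coalescence time $\Theta[0]$.

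This creates two further steps that your proposal does not address. The factor $\mathbb E|\bar\theta[0]|+1$ in $A_k$ is not a heuristic ``cost of simulating the skeleton''; it arises from Wald's identity applied to $\mathbb P(L_0>i)\le\mathbb E\bigl(\sum_{j=\bar\theta[0]}^{0}{\bf 1}\{\zeta_j>i\}\bigr)=\mathbb E(|\bar\theta[0]|+1)\,\mathbb P(U_0\ge\alpha_i^{c_\tau^{-1}})$, which requires checking the measurability separation between $\{\bar\theta[0]>-n\}$ and $\zeta_{-n}$. And the block-level time $\Theta[0]$ must be converted back to real time via $\Lambda[0]=-\sum_{i=0}^{-\Theta[0]}|B_i|$, a random sum of a random number of block lengths: summability of the tail in (ii) needs optional stopping for the martingale $\sum_{i<n}|B_i|-n\mathbb E|B_0|$, and the exponential tail in (iii) needs a large-deviation bound for $\sum_{j\le\lfloor\rho n\rfloor}|B_j|$. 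Neither of these is ``immediate'' or a generic ``convolution estimate''. Finally, one still has to verify that $\Lambda[0]$ really is a coalescence time for the length-function update rule, which uses the chain of inequalities $\alpha^{m}_{i}\le\alpha_i^{c_\tau(\cdot)}\le\omega_{i+|c_\tau(\cdot)|}(\cdot)$ linking the localized rates to the partition defining $F$; this verification is absent from your sketch.
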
 

The proof of this result is given in Section \ref{sec:prova2}. Section \ref{sec:examples1} will discuss  explicit examples. We now
state some direct consequences of Theorem \ref{theo2}.

Theorem \ref{theo2} states, in particular, that the CFTP algorithm is feasible with some function $F$ (which will be constructed in the proof). We recall that this means that the algorithm constructs, for any $i\in\mathbb{Z}$, an almost surely finite sample $[\Phi({\bf U})]_{\theta[i]}^{i}$, which is a deterministic function of ${\bf U}$. In the sequel, we will often write $X_{i}$ for $[\Phi({\bf U})]_{i}$ (and ${\bf X}$ for $[\Phi({\bf U})]_{-\infty}^{+\infty}$) in order to avoid overloaded notations, keeping in mind the fact that for any $i$, $X_{i}$ is constructed as a deterministic function of ${\bf U}$. Actually, by Theorem \ref{theo2},  $X_{i}$ depends only on a $\mathbb{P}$-a.s. finite part of this sequence since  $X_{i}:=[X(\ldots,u_{\theta[i]-1},U_{\theta[i]},\ldots,U_{i},u_{i+1},\ldots)]_{i}$ for any ${\bf u}\in [0,1[^{\mathbb{Z}}$. 
As we already mentioned in Section \ref{sec:notation}, the existence of a perfect simulation algorithm has important consequences, stated in the two next corollaries, and whose proofs use standard arguments given, for example, in \cite{comets/fernandez/ferrari/2002}. 

\begin{coro}\label{coro4}\emph{(Existence and uniqueness)}.
${\bf X}$ is stationary and compatible with $P$. Denoting by $\mu$ the
stationary measure of ${\bf X}$ we have
\[
\mu(\cdot):=\mathbb{P}(\Phi({\bf U})\in \cdot).
\]
Moreover, $\mu$ has support on the set of continuous pasts of $P$.
\end{coro}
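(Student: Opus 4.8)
\textbf{Proof proposal for Corollary \ref{coro4}.}

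The plan is to establish three claims in turn: that $\mathbf{X}$ is stationary, that it is compatible with $P$, and that its law $\mu$ is supported on the continuous pasts of $P$. For stationarity, I would exploit the fact that $\mathbf{X}$ is constructed as a deterministic function of the i.i.d.\ sequence $\mathbf{U}$, namely $X_i = [\Phi(\mathbf{U})]_i = F_{[\theta[i],i]}(\underline{a}, U_{\theta[i]}^i)$, and that this construction is translation-covariant. More precisely, let $S$ denote the shift on $[0,1[^{\mathbb{Z}}$. Since the update function $F$, the iterations $F_{[k,l]}$ in \eqref{eq:F}, and the coalescence times $\theta[i]$ in \eqref{eq:theta} are all defined in a shift-equivariant way (the index windows translate together with the driving variables), one gets $[\Phi(S\mathbf{U})]_i = [\Phi(\mathbf{U})]_{i+1}$. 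Because $\mathbf{U}$ is i.i.d., its law is shift-invariant, and hence the law $\mu$ of $\Phi(\mathbf{U})$ is shift-invariant as well; this is exactly stationarity of $\mathbf{X}$. The displayed formula $\mu(\cdot) = \mathbb{P}(\Phi(\mathbf{U}) \in \cdot)$ is then simply the definition of the law of the constructed chain.

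For compatibility, I would verify the defining identity \eqref{compa}, that is, $\mu(X_0 = a \mid X_{-\infty}^{-1} = a_{-\infty}^{-1}) = P(a \mid a_{-\infty}^{-1})$ for $\mu$-a.e.\ past. The key structural input is that $X_0$ is built from the past symbols $X_{-\infty}^{-1}$ together with $U_0$ through the update function: by the a.s.\ finiteness of $\theta[0]$ from Theorem \ref{theo2} and the consistency of the iterations in \eqref{eq:F}, one has $X_0 = F(X_{-\infty}^{-1}, U_0)$ almost surely. Since $U_0$ is independent of $X_{-\infty}^{-1}$ (the latter being measurable with respect to $U_{-\infty}^{-1}$, as coalescence only looks into the past), I would condition on $X_{-\infty}^{-1} = a_{-\infty}^{-1}$ and use the defining property of the update function, $\mathbb{P}(F(\underline{a}, U_0) = a) = P(a \mid \underline{a})$, to obtain exactly \eqref{compa}. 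The a.s.\ finiteness of $\theta[0]$ is what guarantees $X_0$ is genuinely a function of the past and not of information that would break the conditioning.

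For the support statement, I would argue that a discontinuous past can only be reconstructed with probability zero. The mechanism is that coalescence at time $0$ occurs, by the construction underlying Theorem \ref{theo2}, precisely because at some finite look-back either a symbol is forced (the $Y_j \in A$ case) or the relevant context length is finite ($c_\tau^j \le j - i$); in either situation the reconstructed past $X_{-\infty}^{-1}$ must have a finite context $c_\tau(X_{-\infty}^{-1})$, and by the first observation following Definition \ref{def:LC} every such past is a continuity point for $P$. Since $\theta[0]$ is a.s.\ finite, coalescence happens a.s., so $\mu$-almost every past has finite context length and is therefore continuous; this gives $\mu$-a.s.\ support on the continuous pasts. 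The main obstacle I anticipate is the compatibility step: one must argue carefully that $X_0 = F(X_{-\infty}^{-1}, U_0)$ holds as an identity (not merely in distribution) and that the conditioning on the reconstructed past interacts correctly with the independence of $U_0$, since $X_{-\infty}^{-1}$ is itself a complicated function of $U_{-\infty}^{-1}$; making the measurability and independence bookkeeping precise is where the real work lies, though it follows the standard template of \cite{comets/fernandez/ferrari/2002}.
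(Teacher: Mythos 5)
Your proposal is correct and follows essentially the same route as the paper, which itself gives no separate argument for Corollary \ref{coro4} but defers to the standard CFTP reasoning of \cite{comets/fernandez/ferrari/2002}: shift-equivariance of $\Phi$ plus the i.i.d.\ law of ${\bf U}$ for stationarity, the identity $X_{0}=F(X_{-\infty}^{-1},U_{0})$ combined with independence of $U_{0}$ from $\mathcal{F}(U_{-\infty}^{-1})$ for compatibility. For the support claim, the cleanest justification of your "finite context" step is to note that $|c_{\tau}(X_{-\infty}^{-1})|\leq c_{\tau}^{-1}<+\infty$ $\mathbb{P}$-a.s.\ by item 3 of the observation following Definition \ref{def:good} (rather than deducing it from the a.s.\ finiteness of $\theta[0]$), after which item 1 of the observation following Definition \ref{def:LC} gives continuity of $\mu$-a.e.\ past.
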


\vspace{0.3cm}

When $\mathbb{E}|\theta[0]|<+\infty$, 
the chain ${\bf X}$ exhibits a \emph{regeneration scheme}.
We call time $t$ a regeneration  time for the chain ${\bf X}$ if  $\theta[t,+\infty]=t$.
Define the chain $\boldsymbol{\mathcal{T}}$ on $\{0,1\}$ by $\mathcal{T}_{j}:={\bf 1}\G\{j=\theta[j,+\infty]\D\}$.
Then, consider the sequence of time indexes ${\bf T}$ defined by $\mathcal{T}_{j}=1$ if and only if $j=T_{l}$ for some $l$ in $\mathbb{Z}$, $T_{l}< T_{l+1}$ and with the convention $T_{0}\leq0<T_{1}$. We say that ${\bf X}$ has a regeneration scheme if $\boldsymbol{\mathcal{T}}$ is a renewal chain (that is, if the  increments $(T_{i+1}-T_{i})_{i\in\mathbb{Z}}$ are independent, and are identically distributed for $i\neq0$).

\begin{coro}\label{coro5}\emph{(Regeneration scheme)}.
  Under conditions (ii) and (iii) of Theorem \ref{theo2},
  the chain ${\bf X}$ has a regeneration scheme. The random strings
  \newline$(\Phi({\bf U})_{T_{i}},\ldots,\Phi({\bf U})_{T_{i+1}-1})_{i\neq0}$
  are i.i.d. and have finite expected size. Under the stronger
  requirement of (iii), the lengths of these strings have exponential
  tail.
\end{coro}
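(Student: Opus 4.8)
The plan is to establish Corollary \ref{coro5} by leveraging the coupling-from-the-past structure already provided by Theorem \ref{theo2}, specifically the almost sure finiteness (with summable or exponential tail) of the coalescence times. First I would observe that a regeneration time $t$, defined by $\theta[t,+\infty]=t$, is precisely a time at which the reconstruction of all coordinates from $t$ onward becomes independent of the past before $t$; the key structural fact is that whether $\mathcal{T}_{j}=\one\{j=\theta[j,+\infty]\}$ equals $1$ is determined by the portion of ${\bf U}$ (equivalently ${\bf Y}$) at times $\ge j$. Thus the event $\{\mathcal{T}_{j}=1\}$ depends only on $U_{j}^{+\infty}$, and by the i.i.d. nature of ${\bf U}$ together with the translation-invariance of the update function $F$ and of the coalescence-time definition \eqref{eq:theta}, the chain $\boldsymbol{\mathcal{T}}$ is stationary.

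The crucial step is to prove the renewal (independence-of-increments) property. Here I would argue that at a regeneration time $T_{l}$, the value $X_{T_{l}}$ and the subsequent reconstruction depend only on $U_{T_{l}}^{+\infty}$, whereas the string strictly before $T_{l}$, namely $(\ldots,X_{T_{l}-1})$, together with the previous regeneration time $T_{l-1}$, depends on a disjoint block of the ${\bf U}$-sequence up to time $T_{l}-1$. Because the $U_{i}$ are independent, the inter-regeneration strings $(\Phi({\bf U})_{T_{i}},\ldots,\Phi({\bf U})_{T_{i+1}-1})$ decouple across different blocks, yielding independence; stationarity of the construction (apart from the block straddling time $0$, which explains the $i\neq0$ exclusion) gives identical distribution. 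This is the standard regenerative-process argument, and I would make precise that $\{T_{l}\}$ is well defined as a doubly infinite increasing sequence exactly because $\mathbb{E}|\theta[0]|<+\infty$ under conditions (ii) and (iii), guaranteeing infinitely many regeneration times in both time directions almost surely.

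Finally, I would control the tails of the string lengths $T_{i+1}-T_{i}$. The length of an inter-regeneration block is comparable to the coalescence time, so the summable-tail conclusion of Theorem \ref{theo2}(ii) translates into finite expected block size, and the exponential-tail conclusion of Theorem \ref{theo2}(iii) translates into exponential tail for the block lengths. I would make this comparison explicit by bounding $T_{i+1}-T_{i}$ in terms of the relevant $\theta[\cdot]$ and invoking the corresponding tail estimate.

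The hard part will be verifying the renewal property rigorously, because one must pin down exactly which coordinates of ${\bf U}$ each inter-regeneration string depends on, and confirm that these index sets are genuinely disjoint across successive blocks. The subtlety lies in the fact that $\theta[j,+\infty]$ involves an infinite window, so the measurability and independence claims require a careful argument that the regeneration event and the reconstructed symbols between consecutive regeneration times factor through non-overlapping portions of the driving sequence; this is where the precise definitions \eqref{eq:theta} and \eqref{eq:Phi} must be used most delicately, and where I expect the main technical work to lie.
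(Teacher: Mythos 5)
The paper itself offers no written proof of this corollary: it only says the proof ``uses standard arguments given, for example, in \cite{comets/fernandez/ferrari/2002}'', and your sketch is precisely that standard regenerative argument, so in outline you are on the paper's route. However, the step you yourself single out as ``the hard part'' is also the one place where your argument, as literally written, would fail. The claim that the string strictly before $T_{l}$ and the string from $T_{l}$ onward ``depend on disjoint blocks of the ${\bf U}$-sequence'' is false as a measurability statement: the event $\{\mathcal{T}_{j}=1\}=\{j=\theta[j,+\infty]\}$ is $\sigma(U_{j}^{+\infty})$-measurable, so locating $T_{l-1}$ and $T_{l}$ requires looking arbitrarily far forward, and the random index sets $\{T_{l-1},\dots,T_{l}-1\}$ and $\{T_{l},T_{l}+1,\dots\}$ are \emph{not} determined by disjoint portions of ${\bf U}$. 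The standard repair, which you must make explicit, is a factorization of events: for $j<j'$ one has $\{\mathcal{T}_{j}=1\}\cap\{\mathcal{T}_{j'}=1\}=A_{j}^{j'-1}\cap\{\mathcal{T}_{j'}=1\}$, where $A_{j}^{j'-1}:=\bigcap_{n=j}^{j'-1}\{F_{[j,n]}(\underline{a},U_{j}^{n})\ \text{does not depend on}\ \underline{a}\}$ is $\sigma(U_{j}^{j'-1})$-measurable; this uses the concatenation structure of \eqref{eq:F} (coalescence on $[j,n]$ for $n\geq j'$ follows from coalescence on $[j,j'-1]$ together with coalescence on $[j',+\infty]$). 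Iterating this identity over consecutive regeneration times yields both the renewal property of $\boldsymbol{\mathcal{T}}$ and the independence of the inter-regeneration strings, since on $\{T_{i}=t,\,T_{i+1}=t'\}$ the string $(\Phi({\bf U})_{t},\dots,\Phi({\bf U})_{t'-1})$ is a function of $U_{t}^{t'-1}$ alone.

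A second point left implicit in your tail estimates: Theorem \ref{theo2} controls $\theta[0]=\theta[0,0]$, whereas regeneration is defined through the infinite-window time $\theta[j,+\infty]$, so ``comparable to the coalescence time'' is not automatic. One needs $\mathbb{P}(\mathcal{T}_{0}=1)>0$ and the inclusion $\{T_{1}>n\}\cap\{T_{0}\leq 0\}\subset\{\theta[n,+\infty]\leq 0\}$, together with the observation that the block construction of Section \ref{sec:prova2} (the estimates on $\mathbb{P}(L_{0}>i)$) bounds the infinite-window analogue of $\Theta[0]$ in exactly the same way, as in \cite{comets/fernandez/ferrari/2002}. With these two repairs your outline becomes the standard proof the paper is pointing to.
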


In words, this corollary states that the unique stationary chain
compatible with $(\tau,p)$ under the conditions of Theorem \ref{theo2}
can be viewed as an i.i.d. concatenation of strings of symbols of $A$
having finite expected size. A similar result has been first obtained
in \cite{lalley/1986} for one dimensional Gibbs states under
appropriate conditions on the continuity rate, and then in
\cite{comets/fernandez/ferrari/2002} under weaker conditions. Our
result strengthen all these results, and in particular, since
continuity is not assumed here, our chains are not even necessarily
Gibbsian. This lack of Gibbsianity is easy to establish, using the
recent result of \cite{fernandez/gallo/maillard/2011}, in which it has
been shown that the unique stationary chain compatible with the
p.s. $(\underline{\tau},p)$ ($\underline{\tau}$ is
the tree corresponding to a regeneration process) is not always
Gibbsian, even when it satisfies continuity and $\alpha(a)>0$ for all $a\in A$.

\section{Applications}\label{sec:examples1}

In Section \ref{sec:howreads} we will  explain how
Theorem \ref{theo2} reads in two special cases of local continuity which are the strong local continuity and the uniform local continuity. 
Then, we will present several
examples that  illustrate our results: in Section \ref{sec:specificskeletons} we consider local continuity with respect to two special cases of skeletons and finally, Section \ref{sec:ex} is dedicated to explicit examples.

\subsection{Specific local continuities} \label{sec:howreads}

\paragraph{Uniform local continuity.}

\begin{defi}
 A kernel $P$ belongs to the class of \emph{uniformly local continuous
    kernels with respect to the skeleton $\tau$} if
  \begin{equation}\label{eq:ULC}
\alpha_{i}:=\inf_{v\in\leftexp{<\infty}{\tau}}\alpha_{i}^{v}\stackrel{i\rightarrow+\infty}{\longrightarrow}1.
\end{equation}
We will denote this class by   \emph{ULC($\tau$)}.
\end{defi}

When $P$ belongs to ULC($\tau$), we can use the facts that $\alpha_{i}:=\inf_{k\in N(\tau)}\alpha^{k}_{i}$ converges to $1$, and that $U_{0}$ is independent of $c^{-1}_{\tau}$ (since this later is $\mathcal{F}(U_{-\infty}^{-1})$-measurable) to obtain
\begin{align}
\mathbb{P}(U_{0}\ge \alpha_{i}^{c_{\tau}^{-1}})
&=\sum_{k\in N(\tau)}\mathbb{P}(U_{0}\ge \alpha_{i}^{k})\mathbb{P}(c_{\tau}^{-1}=k)\leq\mathbb{P}(U_{0}\ge \alpha_{i})\sum_{k\in N(\tau)}\mathbb{P}(c_{\tau}^{-1}=k)\leq(1-\alpha_{i}).
\end{align}
We have therefore proved the following corollary.
\begin{coro}\label{coro:ulc}
Restricting the assumptions of Theorem \ref{theo2} to the case of ULC($\tau$) kernels, the same statements hold substituting $A_{m}$ by $1-\mathbb{E}(|\bar{\theta}[0]|+1)(1-\alpha_{m})$, for any $m\ge1$.
\end{coro}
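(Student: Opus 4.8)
The plan is to observe that the corollary is just Theorem \ref{theo2} read through the single ULC($\tau$)-specific inequality established immediately above its statement, and that replacing $A_{m}$ by the smaller quantity $\tilde{A}_{m}:=1-\mathbb{E}(|\bar{\theta}[0]|+1)(1-\alpha_{m})$ only strengthens each of the three hypotheses, so that the conclusions are inherited unchanged. The one ingredient I would invoke is the bound
\[
\mathbb{P}\bigl(U_{0}\ge\alpha_{m}^{c_{\tau}^{-1}}\bigr)\le 1-\alpha_{m},
\]
which was obtained by conditioning on the value of $c_{\tau}^{-1}$, using the independence of $U_{0}$ and $c_{\tau}^{-1}$, the pointwise domination $\alpha_{m}\le\alpha_{m}^{k}$ for $k\in N(\tau)$, and the almost-sure finiteness of $c_{\tau}^{-1}$ guaranteed by the good-skeleton assumption.

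First I would convert this into a pointwise comparison of the two sequences. Since $\mathbb{P}(U_{0}>\alpha_{m}^{c_{\tau}^{-1}})\le\mathbb{P}(U_{0}\ge\alpha_{m}^{c_{\tau}^{-1}})\le 1-\alpha_{m}$ and $\mathbb{E}(|\bar{\theta}[0]|+1)=\mathbb{E}|\bar{\theta}[0]|+1$, the definition of $A_{m}$ gives
\[
A_{m}=\bigl\{1-(\mathbb{E}|\bar{\theta}[0]|+1)\,\mathbb{P}(U_{0}>\alpha_{m}^{c_{\tau}^{-1}})\bigr\}\vee\alpha_{-1}\;\ge\;\tilde{A}_{m},
\]
the maximum with $\alpha_{-1}$ only reinforcing the bound. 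Because $\alpha_{m}\to1$ under ULC($\tau$), one has $0\le\tilde{A}_{m}\le A_{m}\le 1$ for all large $m$, so the two sequences differ only on a finite initial segment that is irrelevant to the tail and convergence statements below.

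With $A_{m}\ge\tilde{A}_{m}$ in hand, each regime is a direct monotonicity check. For the first, $\prod_{j=0}^{k-1}\tilde{A}_{j}\le\prod_{j=0}^{k-1}A_{j}$ term by term, so divergence of $\sum_{k\ge1}\prod_{j=0}^{k-1}\tilde{A}_{j}$ forces divergence of $\sum_{k\ge1}\prod_{j=0}^{k-1}A_{j}$, and Theorem \ref{theo2}(i) gives the almost-sure finiteness of $\theta[0]$. For the other two regimes the relevant quantity is $1-A_{m}\le 1-\tilde{A}_{m}$: summability of $\{1-\tilde{A}_{m}\}$ implies summability of $\{1-A_{m}\}$ (second regime), and exponential decay of $\{1-\tilde{A}_{m}\}$ implies exponential decay of $\{1-A_{m}\}$ (third regime, the hypothesis on $\bar{\theta}[0]$ being untouched). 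In every case the hypothesis phrased with $\tilde{A}_{m}$ implies the original hypothesis of Theorem \ref{theo2}, and the conclusion transfers verbatim. I expect no genuine obstacle: the mathematical content is entirely contained in the ULC bound above, and the only point deserving care is the sign flip between the first regime (a product that must diverge) and the last two (a tail that must be small); one must check that passing to the smaller $\tilde{A}_{m}$ makes each stated hypothesis harder to satisfy, so that it continues to imply the corresponding conclusion.
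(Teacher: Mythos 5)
Your proposal is correct and follows essentially the same route as the paper: the paper's entire proof consists of the displayed computation $\mathbb{P}(U_{0}\ge \alpha_{m}^{c_{\tau}^{-1}})\le 1-\alpha_{m}$ (via independence of $U_{0}$ and $c_{\tau}^{-1}$ and the definition of $\alpha_{m}$ as an infimum over $k\in N(\tau)$), after which the substitution of the smaller quantity into Theorem \ref{theo2} is taken as immediate. Your additional explicit monotonicity checks for the three regimes are exactly the right verification (your aside that the two sequences ``differ only on a finite initial segment'' is unnecessary and not quite accurate, but nothing in the argument relies on it).
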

As we will explain in Section \ref{sec:specificskeletons}, Theorem 4.1 in \cite{comets/fernandez/ferrari/2002} and Theorem 1 in \cite{gallo/garcia/2010} are particular cases of uniform local continuity where $\tau=\emptyset$ and $\tau$ has a terminal string (see Definition \ref{def:terminalstring}), respectively. 

Explicit examples of this regime are given by Examples \ref{ex:AR}, \ref{ex:ita} and \ref{ex:extAR}.

\paragraph{Strong local continuity.}

\begin{defi}\label{def:slc}
 A kernel $P$ belongs to the class of \emph{strongly local continuous
    kernels with respect to the skeleton $\tau$} if for any $v\in\leftexp{<\infty}{\tau}$, there exists a positive integer $h(v)$ such that
for any $k\geq h(v)$, $\alpha_{k}^{v}=1$.
We will denote this class by   \emph{SLC($\tau$)}. 
\end{defi}
These kernels belong to a particular
  class of probability kernels known in the literature under the name
  of \emph{probabilistic context trees}, which have been introduced by
  \cite{rissanen/1983}.
  
  When $P$ belongs to SLC($\tau$) on a finite alphabet, we can use the fact that for any $i\in N(\tau)$, $\alpha_{k}^{i}=1$ for any $k\ge h(i)$, where $h(i):=\sup_{v\in\leftexp{\leq i}{\tau}}h(v)$, and we obtain, using $h^{-1}(i):=\inf\{k\ge1:h(k)>i\}$,
  \begin{align}\label{eq:upper}
\mathbb{P}(U_{0}\ge \alpha_{i}^{c_{\tau}^{-1}})\leq\mathbb{P}(c_{\tau}^{-1}> h^{-1}(i))=\mathbb{P}(c_{\tau}^{0}> h^{-1}(i)).
\end{align}
We have therefore proved the following corollary.
\begin{coro}\label{coro:slc}
Restricting the assumptions of Theorem \ref{theo2} to the case of SLC($\tau$) kernels on finite alphabet, the same statements hold substituting $A_{m}$ by $1-\mathbb{E}(|\bar{\theta}[0]|+1)\mathbb{P}(c_{\tau}^{0}>h^{-1}(m))$, for any $m\ge1$.
\end{coro}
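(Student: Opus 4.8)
The plan is to obtain Corollary~\ref{coro:slc} as an immediate specialization of Theorem~\ref{theo2}. Writing
$$\tilde{A}_{m}:=1-\mathbb{E}(|\bar{\theta}[0]|+1)\,\mathbb{P}(c_{\tau}^{0}>h^{-1}(m)),$$
the whole task reduces to establishing the ordering $A_{m}\geq\tilde{A}_{m}$ for every $m\geq1$; once this is available, the three regimes transfer by monotonicity, because each of the conditions (i)--(iii) of Theorem~\ref{theo2} is preserved when the numbers $A_{m}$ are enlarged. Thus the real content of the corollary is concentrated in the estimate \eqref{eq:upper}, whose role is to replace the opaque continuity-failure probability $\mathbb{P}(U_{0}\geq\alpha_{m}^{c_{\tau}^{-1}})$ by the quantity $\mathbb{P}(c_{\tau}^{0}>h^{-1}(m))$, which is computable from the skeleton and the level function $h$ alone.

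First I would justify \eqref{eq:upper}. Since $c_{\tau}^{-1}$ is $\mathcal{F}(U_{-\infty}^{-1})$-measurable it is independent of $U_{0}$, so I would condition on its value and use the defining property of SLC($\tau$): on a \emph{finite} alphabet the set $\leftexp{\leq k}{\tau}$ is finite, hence $h(k)=\sup_{v\in\leftexp{\leq k}{\tau}}h(v)<+\infty$ and $\alpha_{m}^{k}=1$ as soon as $m\geq h(k)$. Because $U_{0}<1$ almost surely, the event $\{U_{0}\geq\alpha_{m}^{c_{\tau}^{-1}}\}$ can then occur only on $\{h(c_{\tau}^{-1})>m\}=\{c_{\tau}^{-1}>h^{-1}(m)\}$, which gives the inequality in \eqref{eq:upper}; the identity $\mathbb{P}(c_{\tau}^{-1}>h^{-1}(m))=\mathbb{P}(c_{\tau}^{0}>h^{-1}(m))$ is just equality in law of $c_{\tau}^{-1}$ and $c_{\tau}^{0}$, inherited from the i.i.d.\ structure of ${\bf Y}$. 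The finiteness of $A$ enters precisely here, to guarantee $h(k)<+\infty$ and hence that $h^{-1}$ is well defined.

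Next I would transfer this bound to $A_{m}$. As $U_{0}$ is uniform on $[0,1[$ and independent of the discrete variable $c_{\tau}^{-1}$, there is no atom at $\alpha_{m}^{c_{\tau}^{-1}}$ and $\mathbb{P}(U_{0}>\alpha_{m}^{c_{\tau}^{-1}})=\mathbb{P}(U_{0}\geq\alpha_{m}^{c_{\tau}^{-1}})$, so \eqref{eq:upper} bounds exactly the probability appearing in the definition of $A_{m}$; discarding the harmless $\vee\,\alpha_{-1}$ then yields $A_{m}\geq\tilde{A}_{m}$. To conclude I would invoke monotonicity. Both sequences are at most $1$ and converge to $1$, since $\mathbb{P}(c_{\tau}^{0}>h^{-1}(m))\to0$ (using $c_{\tau}^{0}<+\infty$ a.s.), so they are eventually positive; hence for (i) one has $\prod_{j<k}A_{j}\geq\prod_{j<k}\tilde{A}_{j}$ along the tail, for (ii) one has $1-A_{m}\leq1-\tilde{A}_{m}$ so that summability passes over, and for (iii) the same inequality transfers the exponential decay. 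In every case the hypothesis written with $\tilde{A}_{m}$ forces the corresponding hypothesis of Theorem~\ref{theo2}, which then supplies the conclusion.

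The one genuinely delicate point I expect is the bookkeeping inside \eqref{eq:upper}: checking that $h$ is nondecreasing so that $\{k:h(k)>m\}$ is an up-set of the form $\{k>h^{-1}(m)\}$ (up to the off-by-one buried in $h^{-1}(m)=\inf\{k:h(k)>m\}$), and confirming that $\alpha_{m}^{k}=1$ is being asserted simultaneously for \emph{all} contexts of length $\leq k$, not just one of them. Everything downstream of \eqref{eq:upper} is routine.
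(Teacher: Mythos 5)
Your proposal is correct and takes essentially the same route as the paper, which derives the corollary in one line from the bound \eqref{eq:upper} (using that $\alpha_{m}^{k}=1$ for $m\ge h(k)$ on a finite alphabet, plus independence of $U_{0}$ and $c_{\tau}^{-1}$ and translation invariance) and then transfers the three regimes by the monotonicity $A_{m}\ge\tilde{A}_{m}$. You simply fill in the details the paper leaves implicit, including the off-by-one in $h^{-1}$ that you rightly flag as the only point needing care.
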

As we will explain in Section \ref{sec:specificskeletons}, the results
of \cite{gallo/2009} are a particular case of strong local continuity
where $\tau$ has a terminal string (see Definition
\ref{def:terminalstring}). Owing to Corollary \ref{coro4}, 
 the compatible stationary measure has support on the set of
finite length contexts.  An explicit example of such regime is given
by Example \ref{ex:eu}.

\subsection{Specific skeletons}\label{sec:specificskeletons}

{\bf Skeleton $\tau=\emptyset$}

\vspace{0.2cm}

\cite{comets/fernandez/ferrari/2002}  assumed that $\sum_{a\in A}\inf_{\underline{z}}P(a|\underline{z})>0$ (weak non-nullness), and that the sequence $\{\omega_{k}\}_{k\geq0}$, defined by (\ref{eq:continuity}), satisfies
\begin{equation}\label{regimefinite}
\sum_{k\geq1}\prod_{j=0}^{k-1}\omega_{k}=+\infty. 
\end{equation}
This implies, in particular, that $P$ is continuous. Without further information on $P$, this uniform convergence assumption gives us the possibility of using \emph{any} skeleton $\tau$.
In order to fix ideas, we will choose $\tau=\emptyset$, which is the
simplest skeleton and we have $P$ is ULC($\tau$). Thus we
have $\alpha_{k}:=\omega_{k}$ for any $k\ge0$ and $p:=\{p(a|v)\}_{a\in
  A,\,v\in\leftexp{<\infty}{\tau}}$ is in fact $\{\alpha(a)\}_{a\in
  A}$. 
In
this case, since all the contexts of $\tau=\emptyset$ have length $0$,
we have $\bar{\theta}[0]=0$,
$\mathbb{E}|\bar{\theta}[0]|+1=1$ and thus,
$A_{k}=\alpha_{k}=\omega_{k}$. This shows that our Corollary \ref{coro:ulc} retrieves the results
of \cite{comets/fernandez/ferrari/2002}. 

\vspace{0.3cm}

{\bf Skeletons with a terminal string}

\vspace{0.1cm}

\begin{defi}\label{def:terminalstring}
We say that $w$ is a terminal string for a skeleton $\tau$ if  for any $v\in\leftexp{<\infty}{\tau}$
we have $v_{-|v|+i}^{-|v|+|w|-1+i}\neq w,\,\,i=1,\ldots,
|v|-|w|$. 
\end{defi}

\begin{prop}\label{prop:skel1}
Consider a probabilistic skeleton $(\tau,p)$ for which $\tau$ has a
terminal context $w$, and $p=\{p(a|v)\}_{a\in
  A,\,v\in\leftexp{<\infty}{\tau}}$ satisfies
$\inf_{i=1,\ldots,|w|}\inf_{v\in\tau}p(w_{-i}|v)=\epsilon$ for some
$\epsilon>0$. These p.s.'s are good and the corresponding  good
coalescence time  has exponential tail.
In the particular case where $|w|=1$, we have $\mathbb{P}(\bar{\theta}[0]\leq-n)= (1-\epsilon)^{n}$, $n\ge1$.
\end{prop}
The proof of this proposition is immediate once we observe that  
\[
\bar\theta[0]\ge \sup\{i\leq -|w|+1: Y_{i}^{i+|w|-1}=w\}.
\]
We have the following for skeletons with a terminal string.
\begin{itemize}
\item When $P$ is in SLC($\tau$), Corollary  \ref{coro:slc} extends Theorem 5.1 of \cite{gallo/2009} (as explained by Example \ref{ex:eu} below).
\item When $P$ is in ULC($\tau$), Corollary \ref{coro:ulc} extends Theorem 1 of \cite{gallo/garcia/2010}.
\end{itemize}

\subsection{Examples}\label{sec:ex}

Several examples of continuous and discontinuous kernels can be found
in the literature of perfect simulation for chains with infinite
memory (we refer to \cite{comets/fernandez/ferrari/2002},
\cite{gallo/2009} or \cite{desantis/piccioni/2012} for instance). 

Kernels that are locally continuous are not necessarily
  discontinuous neither necessarily continuous. An important aspect of
  the present work is that we not only want to consider
  discontinuous kernels, but also to ``speed up'' the CFTP algorithms
  that have been proposed in the literature in the sense that the tail
  distribution of the coalescence time $\theta[0]$ of our CFTP will
  decay faster to zero. In some cases, it will decay to zero (and
  therefore $\theta[0]$ will be a.s. finite) while  other CFTP's of the literature will not be feasible since
  their  coalescence 
  time are not a.s. finite.

We now present five examples on the binary alphabet $A=\{-1,+1\}$. 

\subsubsection{Using $\emptyset$ as skeleton}\label{sec:CFFexample}
\begin{example}\label{ex:AR} 
\emph{Our first example is   the
well-known binary auto-regressive processes (AR), which is precisely the main example 
presented in \cite{comets/fernandez/ferrari/2002}. 
 These models are defined
using a continuously differentiable increasing function
$\psi:\mathbb{R}\rightarrow]0,1[$ and an absolutely summable sequence of real
numbers $\{\theta_{n}\}_{n\geq0}$:
\[
P(1|\underline{a}):=\psi\left(\theta_{0}+\sum_{k\geq1}\theta_{k}a_{-k}\right)\,,\,\,\,\,\forall \underline{a}\in A^{-\mathbb{Z}}. 
\]
Such kernels are continuous since for any $\underline{a}$, we have
\begin{align*}
\omega_{k}(\underline{a})&=\inf_{\underline{z}}P(1|a_{-k}^{-1}\underline{z})+\inf_{\underline{z}}P(-1|a_{-k}^{-1}\underline{z})\\
&=\inf_{\underline{z}}\psi\left(\theta_{0}+\sum_{i=1}^{k}a_{-i}\theta_{i}+\sum_{i\ge k+1}z_{-i+k}\theta_{i}\right)+1-\sup_{\underline{z}}\psi\left(\theta_{0}+\sum_{i=1}^{k}a_{-i}\theta_{i}+\sum_{i\ge k+1}z_{-i+k}\theta_{i}\right)\\
&=\psi\left(\theta_{0}+\sum_{i=1}^{k}a_{-i}\theta_{i}-\sum_{i\ge k+1}|\theta_{i}|\right)+1-\psi\left(\theta_{0}+\sum_{i=1}^{k}a_{-i}\theta_{i}+\sum_{i\ge k+1}|\theta_{i}|\right)
\end{align*}
and using the mean valued theorem, we have 
\[
\omega_{k}(\underline{a})=1-2\psi'(c)\sum_{i\ge k+1}|\theta_{i}|
\] 
for some real number $c=c(a_{-k}^{-1})$ in the interval
\[
\left[\theta_{0}+\sum_{i=1}^{k}a_{-i}\theta_{i}-\sum_{i\ge k+1}|\theta_{i}|\,\,,\,\,\,\,\theta_{0}+\sum_{i=1}^{k}a_{-i}\theta_{i}+\sum_{i\ge k+1}|\theta_{i}|\right].
\]
Due to the assumption that $\{\theta_{n}\}_{n\geq0}$ is an absolutely summable sequence of real
numbers, we have that $P$ is continuous in every $\underline{a}$. Moreover, we observe that the rate at which $\omega_{k}(\underline{a})$ converges to $1$ is controlled (exclusively) by the rate at which $r_{k}:=\sum_{i\ge k+1}|\theta_{i}|$ converges to $0$, independently of $\underline{a}$. This is because for any $\underline{a}$, $\psi'(c(a_{-k}^{-1}))$ converges to a positive constant (if we exclude the trivial case of $\psi$ constant). In other words, denoting by $c:=\sup_{\underline{a}}\psi'(c)$ and $C:=\inf_{\underline{a}}\psi'(c)$, we have
\begin{equation}\label{eq:1111}
1-2c\sum_{i\ge k+1}|\theta_{i}|\leq \omega_{k}(\underline{a})\leq 1-2C\sum_{i\ge k+1}|\theta_{i}|
\end{equation}
showing that there is nothing to gain in using the notion of local continuity. Our conditions for perfect simulation of this model are, using Corollary \ref{coro:ulc} to the case where $\tau=\emptyset$, the same as in \cite{comets/fernandez/ferrari/2002}, as we said in the first part of Section \ref{sec:specificskeletons}.}
\end{example}

\subsubsection{A continuous case for which $\tau=\emptyset$ is not enough}

In certain conditions, $\omega_{k}$ may increase very slowly to $1$. In these cases, the algorithm of \cite{comets/fernandez/ferrari/2002} can be very slow to stop, or may not be feasible. That is, the coalescence time $\theta[0]$, or, roughly speaking, the random number of steps operated by the algorithm, may have no first moment or even may not be almost surely finite.  For a given kernel $P$, a simple reason for which $\omega_{k}$ could increase slowly to $1$ is that some pasts $\underline{a}$ could have a very slow continuity rate. Since the definition of $\omega_{k}$ is uniform on the pasts, it has to take into account these \emph{bad} pasts as well. 
Providing we have the information of the position of these \emph{bad} pasts, our method allows us to create a skeleton $\tau$ for which the set of infinite size contexts is composed by these bad pasts, and to work separately on the problem of the resulting p.s. asking whether it is good or not (according to Definition \ref{def:good}). We observed that in the  example of the AR processes, every past $\underline{a}$ has the same continuity rate, and therefore, the only natural skeleton was $\emptyset$. We now present an example in which this is not the case.

\begin{example}\label{ex:ita}
\emph{This example is an unpublished example presented in \cite{desantis/piccioni/2010}. First, define for any $\sigma\in(0,1)$ and any $\underline{a}\in \{-1,1\}^{-\mathbb{N}}$
\[
T_{\sigma}(\underline{a}):=\inf\{k\geq1:\frac{1}{k}\sum_{i=1}^{k}{\bf1}\{a_{-i}=1\}\geq\sigma\},
\]
with the convention that $T_{\sigma}(\underline{a})=+\infty$ if the set of indexes is empty.
This is the first time the proportion of $1$'s is larger than $\sigma$, when we look backwards in the sequence $\underline{a}$. 
Then, consider two summable sequences  $\{\beta(i)\}_{i\geq1}$ and $\{\gamma(i)\}_{i\geq1}$, such that $\gamma(i)\leq\beta(i)$ and three real numbers $b_{1}\in (0,1)$, $c>0$ and $\sigma>0$ satisfying
\[
b_{1}.\left(1-c\sum_{i\geq1}\beta(i)\right)>\sigma.
\]
The kernel $P$ on $\{-1,1\}$ is defined by
\begin{equation}\label{eq:kernel}
P(1|\underline{a})=b_{1}\left(1-c\sum_{i\geq1}\Big(\beta(i){\bf 1}\{a_{-i}=-1,\,\,T_{\sigma}(\underline{a})>i\}+\gamma(i){\bf 1}\{a_{-i}=-1,\,\,T_{\sigma}(\underline{a})\leq i\}\Big)\right).
\end{equation}
We observe that for pasts $\underline{a}$ such that $T_{\sigma}(\underline{a})=\infty$, the continuity rate is controlled by $\{\beta(i)\}_{i\geq1}$, since $\omega_{k}(\underline{a})=1-b_{1}c\sum_{i\ge k+1}\beta(i)$, while for pasts $\underline{a}$ such that $T_{\sigma}(\underline{a})<\infty$, the continuity rate is controlled by $\{\gamma(i)\}_{i\geq1}$, since $\omega_{k}(\underline{a})=1-b_{1}c\sum_{i\ge k+1}\gamma(i)$ for $k\ge T_{\sigma}(\underline{a})$.  Contrary to Example \ref{ex:AR}, we may have something to gain in using the notion of local continuity, depending on the tails of the series of both sequences. As we will see, a natural choice for the skeleton is
\[
\tau^{\sigma}:=\bigcup_{\underline{a}}a_{-T_{\sigma}(\underline{a})}^{-1}.
\]
\begin{prop}\label{prop:skel2}
For any $\sigma\in(0,1)$, the p.s. $(\tau^{\sigma},p)$, where $\inf_{v\in\tau^{\sigma}}p(1|v)>\sigma$, is good, and has a good coalescence time $\bar{\theta}[0]$ with exponential tail.
\end{prop}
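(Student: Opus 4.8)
The plan is to reduce the proposition to the single domination inequality
\[
\bar\theta[0]\ \ge\ -\,c_\tau^0 ,
\]
in exactly the spirit of the one‑line observation that proves Proposition \ref{prop:skel1}, and then to control the right‑hand side by a large‑deviation estimate. Once this is in place everything follows at once: I will show that $c_\tau^0$ is the worst‑case value of a first‑passage time $T_\sigma$ built on an i.i.d.\ sequence whose frequency of $1$'s is strictly larger than $\sigma$, so that $c_\tau^0$ has exponential tail; since $\mathbb{P}(\bar\theta[0]\le -n)\le \mathbb{P}(c_\tau^0\ge n)$, the good coalescence time then has exponential tail and, a fortiori, $\mathbb{E}|\bar\theta[0]|<+\infty$, i.e.\ $(\tau^\sigma,p)$ is good.

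For the tail of $c_\tau^0$ I would argue as follows. By \eqref{eq:ctau}, $c_\tau^0=\sup_{\mathbf a}|c_\tau(Y_{-\infty}^0(\mathbf a))|$, and since the length of a context of $\tau^\sigma$ is precisely $T_\sigma$, the supremum over completions is realized by filling every $\star$‑coordinate of $\mathbf Y$ with $-1$: inserting extra $1$'s can only make the backward running frequency reach $\sigma$ sooner and hence shorten the context. Thus $c_\tau^0=T_\sigma$ evaluated on the i.i.d.\ indicators $\xi_l:=\mathbf 1\{Y_l=1\}$, which are Bernoulli with parameter $\mathbb{P}(Y_l=1)=\alpha(1)$. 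The hypothesis $\inf_{v\in\tau^\sigma}p(1|v)>\sigma$ yields $\alpha(1)=\inf_{\underline z}P(1|\underline z)=\inf_{v}p(1|v)>\sigma$ (concretely $\alpha(1)=b_1(1-c\sum_{i\ge1}\beta(i))>\sigma$ by the defining assumption on the kernel). Writing $S_k:=\sum_{l=-k+1}^{0}(\xi_l-\sigma)$, the event $\{c_\tau^0>n\}$ forces $S_k<0$ for every $k\le n$, so $\mathbb{P}(c_\tau^0>n)\le\mathbb{P}(S_n<0)=\mathbb{P}\bigl(\mathrm{Bin}(n,\alpha(1))<\sigma n\bigr)\le e^{-In}$ for some rate $I=I(\alpha(1),\sigma)>0$, by Chernoff's bound and $\alpha(1)>\sigma$.

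The crux, and the step I expect to be the main obstacle, is the inequality $\bar\theta[0]\ge -c_\tau^0$ itself, i.e.\ checking that $i:=-c_\tau^0$ lies in the set \eqref{eq:good1} with $m=0$. At $j=i$ the increment $S_{c_\tau^0}-S_{c_\tau^0-1}=\xi_{-c_\tau^0}-\sigma$ is positive (the frequency first reaches $\sigma$ exactly at $k=c_\tau^0$, having been strictly below before), so $\xi_{-c_\tau^0}=1$, hence $Y_i=1\in A$ and the escape clause holds. For $j\in(i,0]$, writing $j=-m$ with $0\le m<c_\tau^0$, minimality of $c_\tau^0$ gives $S_m<0\le S_{c_\tau^0}$, so $\sum_{l=i+1}^{j}(\xi_l-\sigma)=S_{c_\tau^0}-S_m\ge 0$; choosing the window of length $j-i=c_\tau^0-m$ in the definition of $T_\sigma$ at time $j$ shows the running frequency over $[i+1,j]$ already attains $\sigma$, whence $c_\tau^j\le j-i$. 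Thus every $j\in[i,0]$ satisfies the good‑coalescence condition and $-c_\tau^0$ is indeed a good coalescence time for time $0$. The delicate points are (a) justifying uniformly over the window that the worst‑case completion is the all‑$(-1)$ filling, and (b) the ladder/monotonicity step $S_m\le S_{c_\tau^0}$ for all $m\le c_\tau^0$, which makes $-c_\tau^0$ a ladder epoch of the positive‑drift walk $S$; these are precisely what convert the non‑local frequency constraint defining $\tau^\sigma$ into the clean bound $\bar\theta[0]\ge -c_\tau^0$, from which the exponential tail follows.
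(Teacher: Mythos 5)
Your proof is correct and follows essentially the same route as the paper: your candidate coalescence time $-c_{\tau}^{0}$ is exactly the paper's backward first-passage time at which the running frequency of $1$'s in ${\bf Y}$ (with every $\star$ filled by the worst case $-1$) reaches $\sigma$, and the tail estimate is the same Chernoff bound driven by $\mathbb{P}(Y_{0}=1)=\alpha(1)>\sigma$. The only addition is your explicit verification, via the ladder-epoch inequality $S_{m}<0\le S_{c_{\tau}^{0}}$, that this time belongs to the set \eqref{eq:good1} --- a step the paper simply asserts --- and, modulo the same one-unit indexing conventions the paper itself treats loosely, that verification is sound.
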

\begin{proof}
The chain ${\bf Y}$ takes value $1$ with probability $\sigma$ and $\star$ with probability $1-\sigma$. By the definition of $\tau^{\sigma}$, we see that $\bar{\theta}[0]:=\min\{i\geq1:\frac{1}{i}\sum_{j=-i}^{-1}Y({\bf 0})_{j}\ge\sigma\}$ is a stopping time in the past of the form of \eqref{eq:good1}. 
On the other hand, we observe that this random variable has the same tail distribution as $\underline{\theta}[0]:=\min\{i\geq1:\frac{1}{i}\sum_{j=0}^{i-1}Y({\bf 0})_{j}\ge\sigma\}$ since ${\bf Y}({\bf 0})$ is i.i.d. Then, 
\[
\mathbb{P}(\bar{\theta}[0]<-N)=\mathbb{P}(\underline{\theta}[0]>N)\leq \mathbb{P}\left(\frac{1}{N}\sum_{j=0}^{N-1}Y({\bf 0})_{j}<\sigma\right)
\]
which decays exponentially by the well-known \emph{Chernoff bound}.
\end{proof}
}

\emph{
Now, we notice that, for any $i\in N(\tau)$
\[
\alpha_{k}^{i}=\inf_{a_{-k-i}^{-1}:T(\underline{a})\leq k+i}\sum_{a\in A}\inf_{\underline{z}}P(a|a_{-k-i}^{-1}\,\underline{z})=1-b_{1}c\sum_{j\geq k+i}\gamma(j),
\]
which in turns implies that $\alpha_{k}\geq\inf_{i\geq1}[1-b_{1}c\sum_{j\geq k+i}\gamma(j)]\geq1-b_{1}c\sum_{j\geq k}\gamma(j)$.
It follows from the summability of $\{\gamma(i)\}_{i\geq1}$ that $P\in ULC(\tau^{\sigma})$. 
Therefore,   using Proposition \ref{prop:skel2}, we can apply Corollary \ref{coro:ulc} to show that perfect simulation can be done without assuming $\sum_{i\geq1}i\gamma(i)<+\infty$ (assumption required by \cite{desantis/piccioni/2010}).
Moreover, depending on the rate at which $b_{1}c\sum_{j\geq k+1}\gamma(j)$ converges to $0$, we obtain several regimes stated in Theorem \ref{theo2}. All this occurs independently of the sequence $\{\beta(i)\}_{i\geq1}$, which is the sequence that controls the tail of the CFTP in \cite{comets/fernandez/ferrari/2002}.
}
\end{example}

\subsubsection{Three examples in $LC(\underline{\tau})$}\label{sec:discontinuousexamples}
Let $\mathcal{L}(\underline{a})$ denotes the first time we see a $1$ when we
look backward in $\underline{a}$. Formally, $\mathcal{L}(-\underline{1}):=+\infty$ and for any $\underline{a}\neq -\underline{1}$,
\[
\mathcal{L}(\underline{a}):=\inf\{i\ge1:a_{-i}=1\}.
\]

\begin{example} \emph{
The following example was presented in \cite{desantis/piccioni/2012} (see Example 2 therein). It is a kernel  belonging to $LC({\underline{\tau}})$, where $\underline{\tau}$ is defined by \eqref{arvore}, but does not belong to $ULC(\underline{\tau})$ nor $SLC(\underline{\tau})$.
Let $P(1|-\underline{1})=\epsilon>0$, and for any $\underline{a}\neq -\underline{1}$
\[
P(a|\underline{a})=\epsilon+(1-2\epsilon)\sum_{n\ge1}{\bf 1}\{a=a_{-\mathcal{L}(\underline{a})-n}\}q_{n}^{\mathcal{L}(\underline{a})},
\]
where, for any $l\ge1$, $\{q_{n}^{l}\}_{n\ge1}$ is a probability distribution on the integers.
 This kernel  has a discontinuity along $-\underline{1}$, in fact, we have
\begin{align*}
\omega_{k}=\omega_{k}(-\underline{1})&=\inf_{\underline{z}}P(1|-1_{-k}^{-1}\underline{z})+1-\sup_{\underline{z}}P(1|-1_{-k}^{-1}\underline{z})\\
&=\epsilon+1-\epsilon-(1-2\epsilon)=2\epsilon<1.
\end{align*}
On the other hand, it belongs to $LC(\underline{\tau})$ since for any $j\ge1$
\begin{align*}
\alpha_{k}^{j}&:=\inf_{v\in\leftexp{\leq j}{\underline{\tau}}}\inf_{a_{-k}^{-1}\in A^{k}}\sum_{a\in A}\inf_{\underline{z}}P(a|va_{-k}^{-1}\underline{z})\\
&=\inf_{l\leq j}\inf_{a_{-k}^{-1}\in A^{k}}\sum_{a\in A}\inf_{\underline{z}}P(a|(-1)^{l-1}\,1\,a_{-k}^{-1}\underline{z})=2\epsilon+(1-2\epsilon)\inf_{l\leq j}\sum_{i=1}^{k}q_{i}^{l}
\end{align*}
which goes to $1$ since for any $j\ge1$, $\{q_{i}^{j}\}_{i\ge1}$ is a probability distribution. Since the p.s. is $(\underline{\tau},p)$ with $p(a|v)\ge\epsilon$ for any $v\in\underline{\tau}$, and since $\underline{\tau}$ has $1$ as terminal string, it follows by Proposition \ref{prop:skel1} that it is a good p.s., with $\bar{\theta}[0]$ satisfying $\mathbb{P}(\bar{\theta}[0]\leq-n)=(1-\epsilon)^{n}$.  By Theorem \ref{theo2}, the tail distribution of the CFTP is related to the the tail distribution of $1-A_{k}:=(\mathbb{E}|\bar{\theta}[0]|+1)\mathbb{P}(U_{0}>\alpha_{k}^{c_{\underline{\tau}}^{-1}})
$,
and using the expression obtained above for $\alpha_{k}^{j}$, we obtain
\[
1-A_{k}=\frac{(1-2\epsilon)(1+\epsilon)}{\epsilon}[1-\sum_{j\ge1}\mathbb{P}(c_{\underline{\tau}}^{-1}=j)\inf_{l\leq j}\sum_{i=1}^{k}q_{i}^{l}].
\]
  In order to fix ideas, we will take, as suggested by \cite{desantis/piccioni/2012}, $\sum_{i=1}^{k}q_{i}^{l}\sim 1-(1-j^{-a})^{k+1}$, $l,k\ge1$,  with $a>0$. We then have $\inf_{l\leq j}\sum_{i=1}^{k}q_{i}^{l}\sim 1-(1-j^{-a})^{k+1}$ and we observe that in this case, $\inf_{i\ge1}\alpha_{k}^{i}=2\epsilon$ for any $k\ge1$, meaning that this kernel does not belong to ULC($\underline{\tau}$), and for any $i$, $\alpha_{k}^{i}<1$ for all $k\ge1$, meaning that it does not belong to SLC($\underline{\tau}$) neither. In order to prove that a CFTP is feasible, \cite{desantis/piccioni/2012} use the assumptions that $a<1$ and that $a_{\infty}+\epsilon>1$.}
\emph{In our case, using the fact that $\mathbb{P}(c_{\underline{\tau}}^{-1}=j)=(1-\epsilon)^{j-1}\epsilon$, we obtain
 \[
 1-A_{k}\leq\sum_{j\ge1}(1-\epsilon)^{j-1}(1-j^{-a})^{k+1},
 \]
and it follows that this kernel is in the regime (ii) of Theorem \ref{theo2}, since}
\begin{align*}
\sum_{k\ge0}(1-A_{k})&\leq\sum_{j\ge1}(1-\epsilon)^{j-1}j^{a}<\infty\,,\,\,\,\forall a>0.
\end{align*}
\emph{Observe that the restrictions that $a<1$ and that $a_{\infty}+\epsilon>1$ do not appear here.}
\end{example}

\begin{example}\label{ex:extAR}
\emph{
We now propose a simple extension of the AR processes which allows to choose
different models according to the past we consider.  It provides us with an example of kernel belonging to $ULC(\underline{\tau})$. 
Assume that we have two models
parametrized by $(\psi, \{\theta_{n}\}_{n\ge0})$ and $(\bar{\psi},
\{\bar{\theta}_{n}\}_{n\ge0})$, the ``standard model'' and the
``alternative model'' respectively.  Now, suppose that we choose the
standard model when $\mathcal{L}(\underline{a})$ is odd and the
alternative one otherwise. That is,
\[
P(1|\underline{a}):= \left\{ \begin{array}{ll}
\psi\left(\theta_{0}+\sum_{k\geq1}\theta_{k}a_{-k}\right), & \mbox{ if }
\mathcal{L}(\underline{a}) \,\,\mbox{is odd} \\
\bar{\psi}\left(\bar{\theta}_{0}+\sum_{k\geq1}\bar{\theta}_{k}a_{-k}\right), & \mbox{ if }
\mathcal{L}(\underline{a}) \,\, \mbox{is even.}
\end{array} \right.
\]
This model  has a discontinuity at
$-\underline{1}$. To see this, observe that
$P(1|(-1)_{-k}^{-1}\,\underline{1})$ takes value
$\psi\left(\theta_{0}-\sum_{i=1}^{k}\theta_{i}+\sum_{n\ge k+1}\theta_{n}\right)$ or
$\bar{\psi}\left(\bar{\theta}_{0}-\sum_{i=1}^{k}\bar{\theta}_{i}+\sum_{n\ge k+1}\bar{\theta}_{n}\right)$, according to 
$k$ being odd or even, and therefore does not converge in $k$, as $\psi(\theta_{0})\neq
\bar{\psi}(\bar{\theta}_{0})$. Nevertheless, using similar calculations as in Example \ref{ex:AR}, we observe that this kernel belongs to LC($\tau$), since $\alpha_{k}^{i}$ takes value $1-2\psi'(c)\sum_{j\ge k+1}|\theta_{j}|\wedge 1-2\bar{\psi}'(c)\sum_{j\ge k+1}|\bar{\theta_{j}}|$ and converges to $1$ as $k$ diverges. Since this quantity does not depend on $i$, it follows that $\alpha_{k}:=\inf_{i\ge1}\alpha^{i}_{k}$ converges to $1$ as well and therefore, $P$ belongs to ULC($\underline{\tau}$). Using Corollary \ref{coro:ulc} together with Proposition \ref{prop:skel1}, we conclude that  the rate at which $1-(\frac{1}{\epsilon}+1)(1-\alpha_{m})$ converges to zero controls the tail distribution of the coalescence time of the CFTP.}
\end{example}

\begin{example}\label{ex:eu}
\emph{The following example is inspired in \cite{gallo/2009}. We let $h:\mathbb{N}\rightarrow2\mathbb{N}+1$ be non-decreasing and unbounded and for any $v\in A^{\star}$ with $|v|$ odd, we let $\textrm{Maj}(v)$ denotes the symbol that most appears in $v$ (Maj stands for \emph{majority} here). Then, we put, for any $\underline{a}$ with $\mathcal{L}(\underline{a})=l\ge1$
\[
P(1|\underline{a})=\epsilon_{l}{\bf1}\left\{\textrm{Maj}(a^{-l-1}_{-l-h(l)})=-1\right\}+(1-\epsilon_{l}){\bf1}\left\{\textrm{Maj}(a^{-l-1}_{-l-h(l)})=1\right\}
\]
where $\{\epsilon_{l}\}_{l\ge0}$ is a $]\epsilon,1/2-\epsilon[$-valued sequence, with $0<\epsilon<1/4$. Put also $P(1|-\underline{1})\ge\epsilon$. Here also, we can compute
\[
\omega_{k}(-\underline{1})=1-\sup_{l,m\ge k}|\epsilon_{l}-\epsilon_{m}|,
\]
which will not converge to $1$ if and only if $\{\epsilon_{l}\}_{l\ge0}$ does not converge. We assume therefore that 
this is the case, and since for any $\underline{a}$ such that $\mathcal{L}(\underline{a})=l<+\infty$ we have $\omega_{k}(\underline{a})=1$ for any $k\ge l+h(l)+1$, we conclude that $P$ is discontinuous and belongs to SLC($\underline{\tau}$). Applying Corollary \ref{coro:slc} together with Proposition \ref{prop:skel1}, we conclude that the way $(\frac{1}{\epsilon}+1)\mathbb{P}(c_{\tau}^{0}>h^{-1}(m))=(\frac{1}{\epsilon}+1)(1-\epsilon)^{h^{-1}(m)}$ converges to $0$ controls the tail distribution of the coalescence time of the CFTP. For instance, when
\[
\limsup\frac{\log h(k)}{C_{\epsilon}k}<1\,,\,\,\,\,C_{\epsilon}=-\log(1-\epsilon),
\] 
we get, as obtained by Theorem 1 in \cite{gallo/2009}, that the coalescence time of the CFTP has summable tail. }

\end{example}

\section{Proof of Theorem \ref{theo2}} \label{sec:prova2}

\subsection{The update and the length function} Let us say, before going into any further details, that the update function we will use is the same (with some simple changes to make it suitable when $P$ is not necessarily continuous) as the one used by \cite{comets/fernandez/ferrari/2002}, and which underlies the works of \cite{gallo/2009} and \cite{desantis/piccioni/2012}. 
This update function is defined through  the partition of $[0,1[$  represented on Figure \ref{fig:partition}, where for any $a$ and $\underline{a}$, the intervals have length
\begin{align*}
&|I_{0}(a|\emptyset)|=\alpha_{-1}(a)\\
&|I_{k}(a|a_{-k}^{-1})|:=\inf_{\underline{z}}P(a|a_{-k}^{-1}\,\underline{z})-\inf_{\underline{z}}P(a|a_{-k+1}^{-1}\,\underline{z}),\,\,\,\,\forall k\geq1,\\
&|I_{\infty}(a|\underline{a})|:=P(a|\underline{a})-\lim_{k\rightarrow \infty}\inf_{\underline{z}}P(a|a_{-k}^{-1}\,\underline{z}), 
\end{align*}
The only difference  with the partition  used by \cite{comets/fernandez/ferrari/2002} (see Figure 1 therein),   is the addition of the intervals $|I_{\infty}(a|\underline{a})|$, due to the fact that when $P$ is not assumed to be continuous, $\omega_{k}(a|a_{-k}^{-1})$ may not converge to $1$ for some pasts.
\begin{figure}[htp]
\centering
\includegraphics{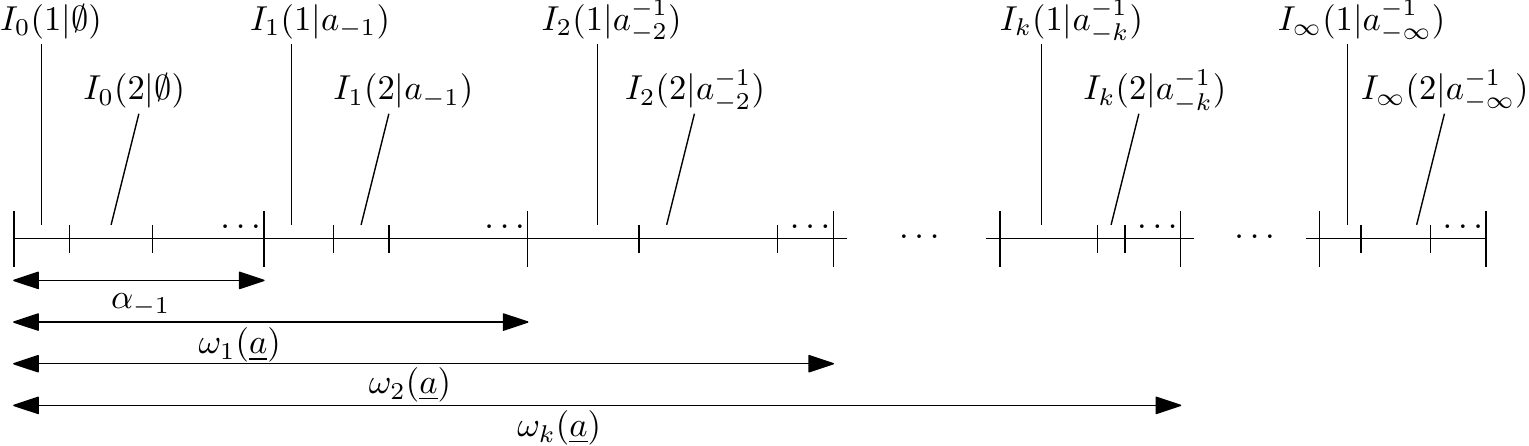}
\caption{Illustration of the partition related to some infinite past $\underline{a}$.}
\label{fig:partition}
\end{figure}

With this partition in hands, the update function is simple to define:
\[
F(U_{0},\underline{a}):=\sum_{a\in A}a{\bf1}\{U_{0}\in\cup_{k\ge0}\cup_{a\in A}I_{k}(a|a_{-k}^{-1})\}.
\]
Algorithmically, this update function is practical for the following reason. If we introduce the length function
\begin{equation}\label{eq:Lfunction}
L(U_{0},\underline{a}):=\sum_{k\ge0}k{\bf1}\{U_{0}\in\cup_{a\in A}I_{k}(a|a_{-k}^{-1})\}+\infty.{\bf 1}\{U_{0}\in \cup_{a\in A}I_{\infty}(a|\underline{a})\}
\end{equation}
we observe that, whenever $L(U_{0},\underline{a})\leq k<\infty$ (this occurs when $U_{0}\leq \omega_{k}(\underline{a})$) we have $F(U_{0},\underline{a})=F(U_{0},\underline{b}a_{-k}^{-1})$ for any $\underline{b}$. This means that the value of $F(U_{0},\underline{a})$ can be decided looking only at (at most) the $k$ last symbols of the infinite past $\underline{a}$.
In other words, once the algorithm constructs $k$ symbols $a_{-k}^{-1}$, from, say, times $n-k$ to time $n-1$, the construction of the next symbol is possible when $L(U_{n},\underline{a})\leq k$ because this event only depends on $a_{-k}^{-1}$. This is not only a ``practical'' advantage, but also a mathematical advantage to  prove that $\theta[0]$ (defined as \eqref{eq:theta} using this update function) is $\mathbb{P}$-a.s. finite. 

Let us consider, for any $-\infty<m\leq n\leq +\infty$
\[
\theta'[m,n]:=\max\{i\leq m:\,\textrm{for any }\underline{a}\,,\,\,L(\underline{a}F_{\{i,j-1\}}(\underline{a},U_{i}^{j-1}),U_{j})\leq j-i\,,\,j=i,\ldots,n\}
\]
with the convention that $\theta[m]:=\theta[m,m]$ and where $F_{\{m,n\}}(\underline{a},U_{m}^{n})$ denotes the whole constructed string based on the past $\underline{a}$, that is, for any $-\infty<m\leq n\leq +\infty$
\[
F_{\{m,n\}}(\underline{a},U_{m}^{n}):=F(\underline{a},U_{m})F_{[m,m+1]}(\underline{a},U_{m}^{m+1})\ldots F_{[m,n]}(\underline{a},U_{m}^{n}).
\]

\begin{lemma}\label{lem:3} $\theta'[0]\leq \theta[0]$.
\end{lemma}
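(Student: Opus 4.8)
The plan is to exploit that $\theta[0]=\theta[0,0]$ is a maximum over the set of times $j\le 0$ at which $F_{[j,0]}(\underline a,U_j^0)$ fails to depend on $\underline a$: it therefore suffices to prove that the single time $i:=\theta'[0]$ already belongs to that set, for then $\theta[0]\ge i=\theta'[0]$ immediately. If the set defining $\theta'[0]$ is empty (so $\theta'[0]=-\infty$) there is nothing to prove, so I fix a realization of $\mathbf U$ and assume $i:=\theta'[0]$ is finite. By the definition of $\theta'[0,0]$, for every $\underline a$ and every $j=i,\ldots,0$ we have $L\bigl(\underline a\,F_{\{i,j-1\}}(\underline a,U_i^{j-1}),U_j\bigr)\le j-i$, and the goal is to deduce that $F_{[i,0]}(\underline a,U_i^0)$ is independent of $\underline a$.

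The core of the argument is an induction on $j$ running from $i$ up to $0$, proving the slightly stronger statement that each partial output $F_{[i,j]}(\underline a,U_i^j)$ is independent of $\underline a$. For the base case $j=i$, the block $F_{\{i,i-1\}}(\cdot)$ is empty (an empty construction range, in the spirit of the convention $a_{n+1}^{n}=\emptyset$), so the hypothesis reads $L(\underline a,U_i)\le 0$; by the remark following \eqref{eq:Lfunction}, a length equal to $0$ means $F(\underline a,U_i)=F_{[i,i]}(\underline a,U_i)$ is decided without reading any symbol of $\underline a$, hence is constant in $\underline a$.

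For the inductive step I would unwind the definition \eqref{eq:F}, writing $F_{[i,j]}(\underline a,U_i^j)=F\bigl(\underline a\,F_{\{i,j-1\}}(\underline a,U_i^{j-1}),U_j\bigr)$, and then invoke the decisive length bookkeeping: the constructed block $F_{\{i,j-1\}}(\underline a,U_i^{j-1})$ comprises exactly the symbols produced at times $i,\ldots,j-1$, that is, exactly $j-i$ symbols. Consequently the ``last $j-i$ symbols'' of the extended past $\underline a\,F_{\{i,j-1\}}(\underline a,U_i^{j-1})$ — which, by $L\le j-i$ together with the remark after \eqref{eq:Lfunction}, are the only symbols the update function needs to inspect — coincide precisely with the entire block $F_{\{i,j-1\}}(\underline a,U_i^{j-1})$. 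By the induction hypothesis every symbol of that block is independent of $\underline a$, so $F_{[i,j]}(\underline a,U_i^j)$ is independent of $\underline a$ as well.

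Running the induction up to $j=0$ yields that $F_{[i,0]}(\underline a,U_i^0)$ does not depend on $\underline a$, which is exactly the membership condition placing $i$ in the set defining $\theta[0]$; hence $\theta[0]\ge i=\theta'[0]$, as claimed. I expect the only genuine obstacle to be the careful alignment of the two lengths — matching the bound $j-i$ in the length-function condition with the length $j-i$ of the already-constructed block, so that the inspected suffix is precisely the $\underline a$-independent part — where a single off-by-one slip would break the inductive step; everything else is a direct unwinding of \eqref{eq:F} and \eqref{eq:Lfunction}.
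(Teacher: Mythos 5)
Your proposal is correct and follows essentially the same route as the paper's proof: an induction from $i=\theta'[0]$ up to $0$, using at each step that the length bound $L\le j-i$ forces the update function to inspect only the $j-i$ already-constructed (and, by the induction hypothesis, $\underline a$-independent) symbols. Your length bookkeeping matches the paper's ($F_{\{i,j-1\}}$ has exactly $j-i$ symbols), so there is no off-by-one issue.
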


\begin{proof} 
We will prove that  $\theta'[0]\in\{j\le 0:F_{\{j,0\}}(\underline{a},U_{j}^{0})\,\,\,\textrm{does not depend   on }\,\underline{a}\}.$  
Assume $\theta'[0]=-k>-\infty$. We proceed recursively.  First observe that $U_{-k}\in
  [0,\alpha_{-1}[$ since $L(U_{-k},\underline{a})=0$ for any
  $\underline{a}$, and therefore,
  $F_{\{-k,-k\}}(\underline{a},U_{-k})=F(\underline{a},U_{-k})$ is
  obtained independently of $\underline{a}$. Suppose (recursion
  hypothesis) that 
  for some $l\in\{1,\ldots,k-1\}$,
  the whole string
  $F_{\{-k,-l\}}(\underline{a},U_{-k}^{-l})$ has been constructed
  independently of $\underline{a}$. Since
  $L(\underline{a}F_{\{-k,-l\}}(\underline{a},U_{-k}^{-l}),U_{-l+1})\leq
  k-l+1$, it follows that the value of
  $F(\underline{a}F_{\{-k,-l\}}(\underline{a},U_{-k}^{-l}),U_{-l+1})$ can be obtained independently of $\underline{a}$, and thus,
  concatenating, we obtain that the whole string
  $F_{\{-k,-l+1\}}(\underline{a},U_{-k}^{-l+1})$ has been constructed
  independently of $\underline{a}$, establishing the recursion from
  time $-k$ to time $0$.
\end{proof}

\subsection{Definition of a block-rescaled coalescence time}

 Lemma \ref{lem:3} 
indicates that we can focus on $\theta'[0]$ instead of $\theta[0]$. However, the direct study of $\theta'[0]$ remains an intricate task, and our objective here is to introduce the coalescence time $\Lambda[0]$, defined by equation \eqref{eq:truerege}, which is easier to study. We will nevertheless need to introduce a sequence of technical definitions in order to get to its definition.
Let $\{\theta^{k}\}_{k\geq-1}$ be
the sequence  of r.v.'s defined  by $\theta^{-1}:=1$ and for any $k\geq 0$ 
\[
\theta^{k}=\bar{\theta}[\theta^{k-1}-1],
\]
and partition $-\mathbb{N}$ into disjoint blocks $\{B_{k}\}_{k\geq0}$ where $B_{k}=\{\theta^{k},\ldots,\theta^{k-1}-1\}$. 

We consider the sequence $\{\zeta_{l}\}_{l\in\mathbb{Z}}$ defined for any $i\in\mathbb{Z}$ by
\[
\zeta_{l}:={\bf1}\{U_{l}\ge\alpha_{-1}\}\sum_{k\ge0}k.{\bf1}\{U_{l}\in[\alpha_{k-1}^{c_{\tau}^{l-1}},\alpha_{k}^{c_{\tau}^{l-1}}[\},
\]
and define for any $k\ge0$
\begin{equation}\label{eq:L}
L_{k}:=\sup_{l\in B_{k}}\zeta_{l}.
\end{equation}

An important advantage of this block rescalling, that we will use later, is the fact that the sequence $\{L_{k}\}_{k\geq0}$ is i.i.d. This  follows from the definition of good coalescence time, which implies that for any $j\in B_{k}$, $k\ge0$, such that $U_{j}\ge \alpha_{-1}$, we have
\begin{equation}\label{eq:maxcont}
c_{\tau}^{j-1}=\sup_{{\bf a}}|c_{\tau}(Y({\bf a})_{\theta^{k}}^{j-1})|\leq j-\theta^{k},
\end{equation}
and therefore, $\zeta_{l}$ for $i\in B_{k}$ can be determined using the array $\{U_{j}\}_{j\in B_{k}}$.

Now, introduce the random variable
\begin{equation}\label{eq:regeblocks}
\Theta[0]=\Theta[0]({\bf U}):=\sup\{n\leq 0:L_{i}\leq i-n\,,\,\,i=n,\ldots,0\}
\end{equation}
which will play the role of a coalescence time of $B_0$, in
the block rescaled sequence.
We finally define the main
random variable of interest for the proof of the theorem 
\begin{equation}\label{eq:truerege}
\Lambda[0]=\Lambda[0]({\bf U}):=-\sum_{i=0}^{-\Theta[0]}|B_{i}|.
\end{equation}
We now state two important lemma. Lemma \ref{lem:5} ensures that $\Lambda[0]$ is indeed a coalescence time for time $0$, and Lemma \ref{key-lemma} gives informations on its tail distribution. 

\begin{lemma}\label{lem:5}
$\Lambda[0]\leq \theta'[0]$.
\end{lemma}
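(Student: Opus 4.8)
The goal is to show $\Lambda[0] \leq \theta'[0]$, and by Lemma \ref{lem:3} this chains to $\Lambda[0] \leq \theta[0]$, which is what ultimately matters. The plan is to unwind the definition of $\Lambda[0]$ through the block structure and show that the block-rescaled coalescence event forces the unrescaled coalescence event defining $\theta'[0]$. Concretely, write $-\Theta[0] = N$, so that $\Lambda[0] = -\sum_{i=0}^{N}|B_i| = \theta^{N}$, the left endpoint of the block $B_N$. I want to argue that $L(\underline{a}F_{\{\theta^N,j-1\}}(\underline{a},U_{\theta^N}^{j-1}),U_j) \leq j - \theta^N$ for all $j = \theta^N,\ldots,0$ and all $\underline{a}$, since this is exactly the defining condition of $\theta'[0]$ (which takes a maximum, hence $\theta^N \leq \theta'[0]$).

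First I would record what $\Theta[0]=-N$ buys us: by \eqref{eq:regeblocks}, $L_i \leq i - (-N) \cdot(-1)$, i.e. $L_i \leq i + N$ for each block-index $i = -N,\ldots,0$ (here $i$ ranges over block labels, negative). Unpacking $L_k = \sup_{l\in B_k}\zeta_l$ via \eqref{eq:L}, this says that for every time $l$ lying in block $B_k$ the quantity $\zeta_l$ is bounded by the rescaled slack. The key bridge is the relation between $\zeta_l$ and the length function $L$: $\zeta_l$ is defined so that on the event $U_l \geq \alpha_{-1}$ it records the smallest $k$ with $U_l < \alpha_k^{c_\tau^{l-1}}$, which is precisely the context length needed at time $l$ when the update uses the localized-continuity bound. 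When $U_l < \alpha_{-1}$ we have $\zeta_l = 0$ and the symbol is produced with no dependence on the past. So I would translate each per-time constraint $\zeta_l \leq (\text{available history within the block})$ into the statement that $L(\cdot,U_l)$ is dominated by the number of symbols already reconstructed independently of $\underline{a}$.

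The main technical step is an induction on times from $\theta^N$ up to $0$, entirely parallel to the recursion in the proof of Lemma \ref{lem:3}, but now using the block structure and \eqref{eq:maxcont}. At the left end of a block $B_k$, equation \eqref{eq:maxcont} guarantees $c_\tau^{j-1} \leq j - \theta^k$, so the context length actually demanded by $\zeta_j$ is controlled by the distance back to the start of the current block, not to $\theta^N$. Combining this within-block bound with the across-block coalescence guaranteed by $\Theta[0]=-N$ (which ensures that at each block boundary the reconstruction has already coalesced up to that boundary), I would show inductively that $F_{\{\theta^N,j\}}(\underline{a},U_{\theta^N}^{j})$ does not depend on $\underline{a}$, equivalently that the slack $j - \theta^N$ always exceeds the length $L$ requires. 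The delicate point is bookkeeping the two scales simultaneously: the fine scale (individual times $l$, where $\zeta_l$ and $c_\tau^{l-1}$ live) and the coarse scale (block labels $i$, where $\Theta[0]$ and $L_i$ live); one must verify that the block-level inequality $L_i \leq i+N$ really suffices to clear every fine-scale requirement $\zeta_l \leq l - \theta^N$, which uses exactly the within-block reset \eqref{eq:maxcont}.

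I expect the main obstacle to be making the passage from the rescaled inequality at the block level to the unrescaled length-function inequality at each individual time airtight, since $L_k$ is a supremum over the block and the comparison $j - \theta^N$ mixes contributions of several blocks. The safe route is to prove, as an intermediate claim, that $\Theta[0]=-N$ implies $\zeta_l \leq l - \theta^N$ for every $l \in B_{-N}\cup\cdots\cup B_0 = \{\theta^N,\ldots,0\}$; granting this claim the induction closes immediately because $\zeta_l$ is, by construction, an upper bound for the context length the update function needs at time $l$, so $L$ never asks for more history than has already been coalesced. Once $\theta^N \leq \theta'[0]$ is established, $\Lambda[0] = \theta^N \leq \theta'[0]$ is exactly the assertion of the lemma.
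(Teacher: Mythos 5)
Your overall architecture matches the paper's: unwind $\Lambda[0]$ through the blocks $B_k$, verify the defining condition of $\theta'[0]$ time by time, and use \eqref{eq:maxcont} for the within-block control of $c_{\tau}^{l-1}$. But your ``safe route'' intermediate claim --- that $\Theta[0]=-N$ gives $\zeta_{l}\leq l-\theta^{N}$ for all $l$, and that this closes the induction because ``$\zeta_{l}$ is, by construction, an upper bound for the context length the update function needs at time $l$'' --- is where the argument breaks. The quantity $\zeta_{l}$ records only the depth \emph{beyond} the skeleton context: $\zeta_{l}=k$ means $U_{l}<\alpha_{k}^{c_{\tau}^{l-1}}$, and $\alpha_{k}^{v}$ controls the decision after seeing $|v|+k$ symbols, not $k$. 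The history the length function actually demands at time $l$ is bounded by $c_{\tau}^{l-1}+\zeta_{l}$ (the paper's $\xi_{l}$), so the inequality you must propagate is $c_{\tau}^{l-1}+\zeta_{l}\leq l-\theta^{N}$, not $\zeta_{l}\leq l-\theta^{N}$. This is exactly why the paper's Step~1 replaces the symbol-count $L_{k}$ by the total length $\sum_{j=1}^{L_{k}}|B_{k+j}|$ of $L_{k}$ whole blocks (so that $|B_{k+j}|\geq 1$ absorbs $\zeta_{l}$ while $l-\theta^{k}$ absorbs $c_{\tau}^{l-1}$); you flag this two-scale bookkeeping as ``the delicate point'' but then resolve it by dropping the $c_{\tau}^{l-1}$ term.

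A second, independent gap: even with the corrected bound $\xi_{l}\leq k$, the implication $L(\underline{a}F_{\{l-k,l-1\}}(\underline{a},u_{l-k}^{l-1}),u_{l})\leq k$ is not ``by construction''. The length function is defined through the pointwise rates $\omega_{i}(\cdot)$ of the reconstructed past, whereas $\zeta_{l}$ is defined through the localized rates $\alpha_{i}^{c_{\tau}^{l-1}}$; bridging them requires the chain $\alpha_{i}^{m}\leq\alpha_{i}^{c_{\tau}(\underline{a}F_{\{l-k,l-1\}})}\leq\omega_{i+|c_{\tau}(\underline{a}F_{\{l-k,l-1\}})|}(\underline{a}F_{\{l-k,l-1\}})$ together with $|c_{\tau}(\underline{a}F_{\{l-k,l-1\}})|\leq c_{\tau}^{l-1}$. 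This is the paper's Step~3 and the technical heart of the lemma; your proposal leaves it entirely implicit.
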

\begin{proof}
We will prove that $\Lambda[0]$ belongs to the set of coalescence times $\{i\leq 0:\,\textrm{for any }\underline{a}\,,\,\,L(\underline{a}F_{\{i,j-1\}}(\underline{a},U_{i}^{j-1}),U_{j})\leq j-i\,,\,j=i,\ldots,0\}$. We proceed   in three steps. 

\hspace{0.5cm}{\bf Step 1.} Observe that, for any $k\ge0$ and any $i\in B_{k}$, $
\zeta'_{i}=i-\theta^{k}+{\bf 1}\{U_{i}\ge \alpha_{-1}\}\left(\sum_{j=1}^{L_{k}}|B_{k+j}|\right),\,\,\,\,\forall i\in B_{k} \,\textrm{ and }\, k\ge0$, we can rewrite  $\Lambda[0]$ as $\max\{i\leq m:\,\zeta'_{j}\leq j-i\,,\,j=i,\ldots,n\}$

\hspace{0.5cm}{\bf Step 2.} Since (i)  $|B_{k}|\ge1$, $\forall k\ge0$, (ii) for any $i\in B_{k}$ we have $L_{k}\ge \zeta_{i}$ and (iii) for any $i\in B_{k}$ such that $U_{i}\ge \alpha_{-1}$, we have
\begin{equation}\label{eq:maxcont}
c_{\tau}^{i-1}=\sup_{{\bf a}}|c_{\tau}(Y({\bf a})_{\bar{\theta}[i]}^{i-1})|\leq i-\theta^{k},
\end{equation}
it follows that $\zeta_{i}'\ge \xi_{i}:={\bf 1}\{U_{i}\ge \alpha_{-1}\}c_{\tau}^{i-1}+\zeta_{i}$. 

\hspace{0.5cm}{\bf Step 3.} 
We will prove that,  for any realization ${\bf u}$ of the process ${\bf U}$ such that $\xi_{l}\leq k$, we have $L(\underline{a}F_{\{l-k,l-1\}}(\underline{a},u_{l-k}^{l-1}),u_{l})\leq k$ for any $\underline{a}\in A^{-\mathbb{N}}$. 
This is trivial if $u_{l}\leq \alpha_{-1}$, thus we assume that this is not the case. We have the following sequence of inequalities.
First,  by  \eqref{eq:ctau}, supposing  $c_{\tau}^{l-1}=m$
\begin{equation}\label{eq:inequaliti}
|c_{\tau}(\underline{a}F_{\{l-k,l-1\}}(\underline{a},u_{l-k}^{l-1}))|\leq c_{\tau}^{l-1}=m.
\end{equation}
 This  implies, together with \eqref{eq:alphaki}, that, for any $i\ge0$, 
\[
\alpha_{i}^{m}= \alpha_{i}^{c_{\tau}^{l-1}}\leq \alpha_{i}^{|c_{\tau}(\underline{a}F_{\{l-k,l-1\}}(\underline{a},u_{l-k}^{l-1}))|}\leq\alpha_{i}^{c_{\tau}(\underline{a}F_{\{l-k,l-1\}}(\underline{a},u_{l-k}^{l-1}))},
\]
and by \eqref{condition1} and \eqref{eq:pointcontinuity}, we have 
\[
\alpha_{i}^{c_{\tau}(\underline{a}F_{\{l-k,l-1\}}(\underline{a},u_{l-k}^{l-1}))}\leq \omega_{i+|c_{\tau}(\underline{a}F_{\{l-k,l-1\}}(\underline{a},u_{l-k}^{l-1}))|}(\underline{a}F_{\{l-k,l-1\}}(\underline{a},u_{l-k}^{l-1})).
\]
Since $\xi_{l}\leq k$, we have, using the above inequalities
\[
u_{l}\leq \alpha_{k-m}^{m}\leq  \omega_{k-m+|c_{\tau}(\underline{a}F_{\{l-k,l-1\}}(\underline{a},u_{l-k}^{l-1}))|}(\underline{a}F_{\{l-k,l-1\}}(\underline{a},u_{l-k}^{l-1})).
\] 
According to the partition defining $F$ and $L$, this implies that the length function  $L(\underline{a}F_{\{l-k,l-1\}}(\underline{a},u_{l-k}^{l-1}),u_{l})\leq k-m+|c_{\tau}(\underline{a}F_{\{l-k,l-1\}}(\underline{a},u_{l-k}^{l-1}))|$, and using one more time \eqref{eq:inequaliti} concludes the proof of Step 3.\\

We complete the proof of the lemma  using steps 1, 2 and 3.

\end{proof}

\begin{lemma}[Key-Lemma]\label{key-lemma}
Consider a kernel $P$ belonging to LC($\tau$) with good probabilistic skeleton $(\tau,p)$, and  let $\bar{\theta}[0]$ be the corresponding good coalescence time. 
\begin{enumerate}[(i)]
\item  If $\sum_{k\geq1}\prod_{j=0}^{k-1}\P(L_{0}\leq i)=+\infty$, then $\Lambda[0]$ is $\mathbb{P}$-a.s. finite.
\item If $\bar{\theta}[0]$ has summable tail and $\sum_{k\ge0}\P(L_{0}>i)<+\infty$, then $\Lambda[0]$ has summable tail.
\item If $\bar{\theta}[0]$ has exponential tail and
  $\{\P(L_{0}>i)\}_{k\geq0}$ decays exponentially fast to zero, then
  $\Lambda[0]$ has exponential tail.
\end{enumerate}
In particular, since $\Lambda[0]\leq \theta'[0]\leq\theta[0]$, it follows that in each of these regimes, the same conclusion hold for all these coalescence times.
\end{lemma}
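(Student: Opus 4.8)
The plan is to analyze the tail of $\Lambda[0]$ by exploiting the key structural facts established earlier: that $\{L_k\}_{k\ge 0}$ is an i.i.d.\ sequence (from the block-rescaling argument and \eqref{eq:maxcont}), and that $\Lambda[0]$ is a deterministic function of the block lengths $|B_i|$ and of the block-rescaled coalescence time $\Theta[0]$ via \eqref{eq:truerege}. The governing idea is that $\Theta[0]$, defined in \eqref{eq:regeblocks}, is exactly the coalescence time of a ``house-of-cards''-type process driven by the i.i.d.\ sequence $\{L_i\}$, so its tail is controlled by the classical criteria for such processes, while $\Lambda[0]=-\sum_{i=0}^{-\Theta[0]}|B_i|$ converts a count of blocks back into a count of sites.

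First I would establish a one-step/renewal identity for $\mathbb{P}(\Theta[0]\le -n)$. The event $\{\Theta[0]\le -n\}$ fails to hold at a given level precisely when some $L_i$ is too large, i.e.\ $L_i>i-n$ for some $i\in\{n,\ldots,0\}$; because the $L_i$ are i.i.d., the probability that coalescence has not yet occurred factorizes into a product $\prod_{j=0}^{k-1}\mathbb{P}(L_0\le j)$, in direct analogy with the classical computation underlying \eqref{regimefinite}. This yields: $\Theta[0]$ is a.s.\ finite iff $\sum_{k\ge1}\prod_{j=0}^{k-1}\mathbb{P}(L_0\le j)=+\infty$, which gives part (i) for $\Theta[0]$, and then for $\Lambda[0]$ since $\Lambda[0]$ finite is equivalent to $\Theta[0]$ finite (each $|B_i|$ is a.s.\ finite by the good-skeleton assumption, Observation in the definition of good p.s.). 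For parts (ii) and (iii), I would first bound the tail of $\Theta[0]$ under the summability, respectively exponential-decay, hypotheses on $\mathbb{P}(L_0>i)$: a standard first-moment / union-bound over the possible ``last large $L_i$'' gives $\mathbb{P}(\Theta[0]\le-n)\le\sum_{i\ge n}\mathbb{P}(L_0>i)$ in the summable case and a geometric bound in the exponential case.

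The second, and I expect main, obstacle is transferring the tail of $\Theta[0]$ (a bound on the \emph{number} of blocks) to a tail on $\Lambda[0]$ (a bound on the \emph{number of sites}), since $\Lambda[0]=-\sum_{i=0}^{-\Theta[0]}|B_i|$ is a random sum of i.i.d.\ block lengths with a random number of summands. The block lengths $|B_k|$ are i.i.d.\ copies of $|\bar\theta[0]|+1$ (up to the indexing), so I would control $\mathbb{P}(\Lambda[0]\le -N)$ by splitting on the value of $\Theta[0]$: writing $\{\Lambda[0]\le -N\}\subset\{\Theta[0]\le -n\}\cup\{\sum_{i=0}^{n}|B_i|\ge N,\ \Theta[0]>-n\}$ for a judiciously chosen $n=n(N)$. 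The first event is handled by the $\Theta[0]$ tail bound just obtained; the second is a large-deviation/Markov estimate for a sum of $n$ i.i.d.\ copies of $|\bar\theta[0]|+1$, where the hypotheses on the tail of $\bar\theta[0]$ (summable in (ii), exponential in (iii)) enter decisively. Choosing $n(N)$ proportional to $N/\mathbb{E}(|\bar\theta[0]|+1)$ balances the two contributions.

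The delicate point in this transfer is that in regime (ii) one has only a summable tail on $\bar\theta[0]$ and hence only finite expectation for $|B_0|$, so the random sum has finite mean and one must argue via a renewal-type bound that $\mathbb{P}(\Lambda[0]\le -N)$ remains summable in $N$; the cleanest route is to use that $-\Lambda[0]$ is the first-passage level of a renewal process with inter-arrival law $|\bar\theta[0]|+1$ observed up to block-time $-\Theta[0]$, and to combine the summable $\Theta[0]$ tail with the summable increment tail through a convolution estimate. In regime (iii) both ingredients decay exponentially, so the random sum inherits an exponential tail by a standard Cramér-type argument (the moment generating function of $|\bar\theta[0]|+1$ is finite near the origin), and composing with the exponential tail of $\Theta[0]$ yields the exponential tail of $\Lambda[0]$. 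Finally, since $\Lambda[0]\le\theta'[0]\le\theta[0]$ by Lemma \ref{lem:5} and Lemma \ref{lem:3}, every tail bound proved for $\Lambda[0]$ transfers immediately to $\theta[0]$, which completes all three parts.
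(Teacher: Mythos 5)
Your overall architecture coincides with the paper's: control the tail of $\Theta[0]$ through the i.i.d.\ sequence $\{L_i\}$ via the house-of-cards criteria of \cite{comets/fernandez/ferrari/2002} (the paper invokes their Theorem 4.1(iv) and Proposition 5.1, using $\mathbb{P}(L_0=0)\ge\alpha_{-1}>0$), then transfer to $\Lambda[0]=-\sum_{i=0}^{-\Theta[0]}|B_i|$. Your treatment of (i) is the paper's, and your treatment of (iii) --- split at $n=\lfloor\rho N\rfloor$ with $\rho$ proportional to $N/\mathbb{E}|B_0|$ and apply a Cram\'er-type bound to $\sum_{j=0}^{\lfloor\rho N\rfloor}|B_j|$ --- is exactly display \eqref{eq:paraMPRF}, which the paper settles by Corollary 27.1 of \cite{kallenberg/2002} following Lemma 14 of \cite{harvey/holroyd/peres/romik/2007}.

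The gap is in item (ii). The same splitting there requires $\sum_N\mathbb{P}\bigl(\sum_{i=0}^{\lfloor\rho N\rfloor}|B_i|\ge N\bigr)<\infty$, i.e.\ complete convergence of the empirical means of $|B_0|$; by the Hsu--Robbins--Erd\H{o}s theorem this is \emph{equivalent} to $\mathbb{E}|B_0|^2<\infty$, whereas the hypothesis of (ii) gives only a summable tail for $\bar\theta[0]$, hence only $\mathbb{E}|B_0|<\infty$. Your fallback --- asserting that ``the random sum has finite mean'' and invoking an unspecified ``convolution estimate'' --- is not an argument: finiteness of $\mathbb{E}\sum_{i=0}^{-\Theta[0]}|B_i|$ is precisely the claim to be proved, and it does not follow from $\mathbb{E}|B_0|<\infty$ and $\mathbb{E}|\Theta[0]|<\infty$ alone without exploiting the stopping-time structure. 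The paper's proof does exactly this: $\sum_{i=0}^{n-1}|B_i|-n\mathbb{E}|B_0|$ is a martingale for the filtration generated by the pairs $(L_{-i},|B_{-i}|)$, $\Theta[0]$ is a stopping time for that filtration, and the Optional Stopping Theorem yields $\mathbb{E}|\Lambda[0]|=\mathbb{E}|B_0|\cdot\mathbb{E}|\Theta[0]|<\infty$, which for the nonnegative integer-valued $-\Lambda[0]$ is equivalent to a summable tail. A secondary flaw: your claimed bound $\mathbb{P}(\Theta[0]\le-n)\le\sum_{i\ge n}\mathbb{P}(L_0>i)$ does not follow from a union bound over the ``last large $L_i$'' --- that union bound only gives the non-decaying quantity $\sum_{j=0}^{n}\mathbb{P}(L_0>j)$; the summable-tail conclusion for $\Theta[0]$ genuinely requires the renewal-theoretic Proposition 5.1 of \cite{comets/fernandez/ferrari/2002}, which is what the paper cites.
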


\begin{proof}
In order to control the tail distribution of $\Lambda[0]$ (see \eqref{eq:truerege}), we first need to control the tail distribution of $\Theta[0]$.
Observe that $\Theta[0]$ is defined over the block rescaled sequence using the i.i.d. sequence of r.v.'s $\{L_{i}\}_{i\geq0}$ exactly as the coalescence time of site $0$ was defined in \cite{comets/fernandez/ferrari/2002} (where it is denoted by $\tau[0]$). Indeed, since the $L_{i}$'s are i.i.d., and since $\mathbb{P}(L_{0}=0)\geq\mathbb{P}(|B_{0}|=1)=\alpha_{-1}>0$, we can invoke Theorem 4.1 item (iv) together with Proposition 5.1 in \cite{comets/fernandez/ferrari/2002} and obtain the following assertions
\begin{description}
\item (\ref{key-lemma}.1) if $\sum_{k\geq1}\prod_{i=0}^{k-1}\mathbb{P}(L_{0}\leq i)=+\infty$, then $\Theta[0]$ is a.s. finite,
\item (\ref{key-lemma}.2) if $\{\mathbb{P}(L_{0}>k)\}_{k\geq0}$ is summable, then $\Theta[0]$ has summable tail,
\item (\ref{key-lemma}.3) if $\{\mathbb{P}(L_{0}>k)\}_{k\geq0}$ decays exponentially fast to $0$, then $\Theta[0]$ has exponential tail.
\end{description} 
Coming back to the definition \eqref{eq:truerege} of $\Lambda[0]$, we
now prove items (i), (ii) and (iii) of the lemma using respectively items
(\ref{key-lemma}.1), (\ref{key-lemma}.2) and (\ref{key-lemma}.3) we
just stated.  Item (i) is direct since the sum of an a.s. finite
number of random variables which are a.s. finite is a.s. finite. For
item (ii),we observe that
\[
\sum_{i=0}^{n-1}|B_{i}|-n\mathbb{E}|B_{0}|
\]
is a martingale with respect to the filtration  $\mathcal{F}((L_{0},|B_{0}|),\ldots,(L_{-i},|B_{-i}|):i\geq0)$. 
Moreover, $\Theta[0]$ is a stopping time with respect to the same filtration, this follows directly from the definitions of $\Theta[0]$. Thus, by the Optional Stopping Theorem
\[
\mathbb{E}\Lambda[0]=\mathbb{E}\left(\sum_{i=0}^{-\Theta[0]}|B_{i}|\right)=\mathbb{E}|B_{0}|.\mathbb{E}|\Theta[0]|,
\]
which is finite by item (\ref{key-lemma}.2) above, in the conditions of item (ii) of the key-lemma. For the proof of item (iii), we use the proof of item (iv) of Lemma 14 in \cite{harvey/holroyd/peres/romik/2007}. Let $\rho:=(2\mathbb{E}|B_{0}|)^{-1}$ and compute 
\[
\mathbb{P}(\Lambda[0]<-n)=\sum_{i\geq0}\mathbb{P}(\Theta[0]=-i,\sum_{j=0}^{i}|B_{j}|>n)
\]
\[
\leq\sum_{i=0}^{\lfloor\rho.n\rfloor}\mathbb{P}\left(\Theta[0]=-i,\sum_{j=0}^{i}|B_{j}|>n\right)+\sum_{i\geq\lfloor\rho.n\rfloor+1}\mathbb{P}(\Theta[0]=-i)
\]
\begin{equation}\label{eq:paraMPRF}
\leq\lfloor\rho.n\rfloor\mathbb{P}\left(\left|\sum_{j=0}^{\lfloor\rho.n\rfloor}|B_{j}|-\lfloor\rho.n\rfloor\mathbb{E}|B_{0}|\right|>n/2\right)+\mathbb{P}(\Theta[0]<-\lfloor\rho.n\rfloor)
\end{equation}
Because $|B_{0}|$ has exponential tail, a standard result of large deviation (see for example Corollary 27.1 in \cite{kallenberg/2002}) shows that the first term in the last line decays exponentially in $n$. This proves item (iii) using item (\ref{key-lemma}.3) above. 
\end{proof}

\subsection{Proof of Theorem \ref{theo2}}
Using Lemmas \ref{lem:3}, \ref{lem:5} and \ref{key-lemma}, it remains to prove that  $A_{m}\leq \mathbb{P}(L_{0}\leq m)$, $m\ge0$.
For any $i\ge0$
\begin{align}
\mathbb{P}(L_{0}>i)=&\mathbb{P}(\sup_{j\in B_{0}}\zeta_{j} >i)=\mathbb{P}(\sum_{j=\bar{\theta}[0]}^{0}{\bf1}\{\zeta_{j} >i\}\ge1)\leq\mathbb{E}(\sum_{j=\bar{\theta}[0]}^{0}{\bf1}\{\zeta_{j}>i\}).
\end{align}
We have all the ingredient for applying the Wald inequality:
\begin{itemize}
\item By translation invariance, we have $\mathbb{E}\zeta_{l}=\mathbb{E}\zeta_{0}$ for any $l\in\mathbb{Z}$.
\item By our assumptions, $\mathbb{E}|\bar{\theta}[0]|<\infty$.
\item Finally, for any $n\ge1$,
\begin{align*}
\mathbb{E}({\bf1}\{\zeta_{-n}>i\}.{\bf1}\{\bar{\theta}[0]\leq -n\})&=\mathbb{E}{\bf1}\{\zeta_{-n}>i\}-\mathbb{E}({\bf1}\{\zeta_{-n}>i\}.{\bf1}\{\bar{\theta}[0]> -n\})\\
&=\mathbb{E}{\bf1}\{\zeta_{0}>i\}-\mathbb{E}{\bf1}\{\zeta_{-n}>i\}.\mathbb{E}({\bf1}\{\bar{\theta}[0]> -n\})\\
&=\mathbb{E}{\bf1}\{\zeta_{0}>i\}.\left[1-\mathbb{P}(\bar{\theta}[0]> -n)\right]\\
&=\mathbb{P}(\zeta_{0}>i).\mathbb{P}(\bar{\theta}[0]\leq-n)
\end{align*}
where, for the second line, we used the fact that $\{\bar{\theta}[0]> -n\}$ is measurable with respect to $\mathcal{F}(U_{-n+1}^{0})$, while ${\bf1}\{\zeta_{-n}>i\}$ is $\mathcal{F}(U_{-\infty}^{-n})$-measurable. 
\end{itemize}

Thanks to all these facts, we can use Wald's equality, and obtain
\[
\mathbb{P}(L_{0}>i)\leq \mathbb{E}(|\bar{\theta}[0]|+1)\mathbb{P}(\zeta_{0}>i)=\mathbb{E}(|\bar{\theta}[0]|+1)\mathbb{P}(U_{0}\ge \alpha_{i}^{c_{\tau}^{-1}}),
\]
and since for any $i\ge0$ we have $\mathbb{P}(L_{0}\leq i)\geq \mathbb{P}(L_{0}=0)\ge \alpha_{-1}$, the proof of the theorem is concluded using $A_{k}:=\left\{1-(\mathbb{E}|\bar{\theta}[0]|+1)\mathbb{P}(U_{0}>\alpha_{k}^{c_{\tau}^{-1}})\right\}\vee \alpha_{-1}$.$\hspace{2.5cm}\square$

\section{Relaxing the local continuity
  assumption}\label{sec:extension}

In this section we propose an extension of the notion of local
continuity. 
Local continuity
corresponds to assume that there exists a stopping time for the
reversed-time process, beyond which the decay of the dependence on the
past occurs uniformly. Removing this assumption means that no stopping
time can tell whether or not the past we consider is a continuity
point for $P$. This  idea is now formalized in the following definition.

\begin{defi}\label{def:extendedLC}
We will say that a kernel $P$ belongs to the class of \emph{extended locally continuous} with respect to some skeleton $\tau$ if  for any $v\in\leftexp{<\infty}{\tau}$
\begin{equation}\label{def:Enew}
\bar{\alpha}_{k}^{v}:=\inf_{v_{1}\in \tau}\inf_{v_{2}\in \tau}\ldots\inf_{v_{k}\in\tau}\sum_{a\in A}\inf_{\underline{z}}P(a|v\,v_{1}\,v_{2}\ldots\,v_{k}\,\underline{z})\stackrel{k\rightarrow+\infty}{\longrightarrow}1.
\end{equation}
We will denote this class by   \emph{extLC($\tau$)}. The \emph{probabilistic skeleton (p.s.) of $P$}  is the pair $(\tau,p)$ where $p:=\{p(a|v)\}_{a\in A,\,v\in\leftexp{<\infty}{\tau}}$,
\begin{equation}\label{eq:p}
p(a|v):=\inf_{\underline{z}}P(a|v\underline{z})
\end{equation}
and $p(a|v)=P(a|v)$ for any $v\in \leftexp{\infty}{\tau}$.

\end{defi}

\begin{theo}\label{theo2.5}
Assume that $P$ belongs to extLC($\tau$) with good probabilistic skeleton. Assume furthermore that the good coalescence time $\bar{\theta}[0]$ satisfies that $c_{\tau}^{j}\leq j-\bar{\theta}[0]$ for any $j\in\{\bar{\theta}[0],\ldots,0\}$.
For  any $k\ge0$, denote
$$\bar{A}_{k}:=\left\{1-(\mathbb{E}|\bar{\theta}[0]|+1)\mathbb{P}(U_{0}>\bar{\alpha}_{k}^{c_{\tau}^{-1}})\right\}\vee \alpha_{-1},\,\,\,\,k\ge0.$$
Then, we can construct for $P$, an update
  function $F$ and a corresponding coalescence time $\theta$ such
  that 
\begin{enumerate}[(i)]
\item  If $\sum_{k\geq1}\prod_{j=0}^{k-1}\bar{A}_{k}=+\infty$, then $\theta[0]$ is $\mathbb{P}$-a.s. finite.
\item If  $\sum_{k\ge0}(1-\bar{A}_{k})<+\infty$, then $\theta[0]$ has summable tail.
\item If $\bar{\theta}[0]$ has exponential tail and $\{1-\bar{A}_{k}\}_{k\geq0}$ decays exponentially fast to zero, then $\theta[0]$ has exponential tail.
\end{enumerate}
In particular, in each of these regimes, the CFTP with update function $F$ is feasible.
\end{theo}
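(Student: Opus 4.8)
The plan is to transpose the proof of Theorem~\ref{theo2} to the extended setting, replacing the localized-continuity rate $\alpha_k^v$ by $\bar\alpha_k^v$ throughout, and to deploy the extra hypothesis $c_\tau^j\le j-\bar\theta[0]$ at the single point where $\bar\alpha$---an infimum over concatenations of contexts rather than over all strings---is genuinely weaker than $\alpha$. Because the update function $F$ and the length function $L$ of Section~\ref{sec:prova2} are built solely from the partition of $[0,1[$ and never refer to $\tau$, Lemma~\ref{lem:3} ($\theta'[0]\le\theta[0]$) holds verbatim, and I would take it as the starting point.

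I would keep the block decomposition $\{B_k\}_{k\ge0}$ generated by $\theta^k=\bar\theta[\theta^{k-1}-1]$ and only replace the site variable $\zeta_l$ by
\[
\bar\zeta_l:={\bf 1}\{U_l\ge\alpha_{-1}\}\sum_{k\ge0}k\,{\bf 1}\{U_l\in[\bar\alpha_{k-1}^{c_\tau^{l-1}},\bar\alpha_k^{c_\tau^{l-1}}[\},
\]
setting $\bar L_k:=\sup_{l\in B_k}\bar\zeta_l$. The block-translated hypothesis $c_\tau^j\le j-\theta^k$ makes $c_\tau^{l-1}$, hence $\bar\zeta_l$, a function of $\{U_j\}_{j\in B_k}$ alone for every $l\in B_k$; this is the analog of \eqref{eq:maxcont} and yields that $\{\bar L_k\}_{k\ge0}$ is i.i.d.\ with $\mathbb{P}(\bar L_0=0)\ge\alpha_{-1}>0$. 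I then define $\bar\Theta[0]$ and $\bar\Lambda[0]$ exactly as in \eqref{eq:regeblocks}--\eqref{eq:truerege}.

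The crux is the analog of Lemma~\ref{lem:5}, and the extra hypothesis enters its two substantive steps. In Step~3 one shows that $U_l\le\bar\alpha_r^{c_\tau^{l-1}}$ (i.e.\ $\bar\zeta_l\le r$) forces the length function $L(\underline b,U_l)$ to be at most the symbol-depth of the current context followed by $r$ contexts of $\tau$, where $\underline b$ is the past actually produced by the algorithm. The LC proof used $\alpha_i^v\le\omega_{i+|v|}(\underline b)$, valid because $\alpha_i^v$ infimizes over \emph{all} length-$i$ insertions; for $\bar\alpha$ this can fail, since $\bar\alpha_r^v$ infimizes only over insertions that are concatenations of $r$ contexts. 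Here $c_\tau^j\le j-\theta^k$, applied blockwise, forces $\underline b$ to factor backwards as $v\,v_1v_2\cdots$ with every $v_i\in\tau$, so that $\bar\alpha_r^{c_\tau^{l-1}}$ lower-bounds the relevant $\omega_\cdot(\underline b)$ along the realized chain. In Step~2 the same inequality makes each block a union of whole contexts of this factorization, so that the $r\,(\le\bar L_k)$ contexts span at most $\bar L_k$ blocks; hence the symbol-depth in question is dominated by $(i-\theta^k)+\sum_{j=1}^{\bar L_k}|B_{k+j}|$, the site variable attached to $i$ by the Step-1 rewriting of $\bar\Lambda[0]$. Together these give $\bar\Lambda[0]\le\theta'[0]\le\theta[0]$.

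With this in hand the Key-Lemma~\ref{key-lemma} applies unchanged---its proof uses only that the $\bar L_k$ are i.i.d.\ with a positive atom at $0$, together with the stated tail hypotheses on $\bar\theta[0]$---yielding the three regimes for $\theta[0]$. The concluding Wald estimate of Section~\ref{sec:prova2} then reproduces verbatim, now giving $\mathbb{P}(\bar L_0>i)\le\mathbb{E}(|\bar\theta[0]|+1)\,\mathbb{P}(U_0\ge\bar\alpha_i^{c_\tau^{-1}})$ and hence $\bar A_m\le\mathbb{P}(\bar L_0\le m)$, which closes the argument. The single genuine obstacle is the Step-3 factorization: making rigorous that the extra hypothesis forces the \emph{realized} past to be a context-concatenation, so that the context-only infimum $\bar\alpha$ still dominates the continuity rate $\omega$ encountered during the simulation.
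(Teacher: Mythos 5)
Your proposal is correct and follows essentially the same route as the paper, which itself only sketches this proof by noting that under the extra hypothesis the blocks $B_k$ are themselves concatenations of contexts of $\tau$, so that $\bar L_i=l$ forces the relevant portion of the past to contain $l$ concatenated contexts spanning at most $l$ blocks --- precisely your Step-2/Step-3 factorization argument. The "single genuine obstacle" you flag (rigorously showing the realized past factors backwards into contexts) is exactly the point the paper states as its key observation and then leaves to the reader, so your account is, if anything, more explicit than the published argument.
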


Observe that the assumption that $c_{\tau}^{j}\leq j-\bar{\theta}[0]$
for any $j\in\{\bar{\theta}[0],\ldots,0\}$ is a bit stronger than
simply assuming that $\bar{\theta}[0]$ is good since this
later assumption only implies that $c_{\tau}^{j}\leq
j-\bar{\theta}[0]$ for the time indexes
$j\in\{\bar{\theta}[0],\ldots,-1\}$ such that
$Y_{j+1}=\star$. Nevertheless, for skeletons having
terminal context $w$, $\bar{\theta}[0]:=\max\{i\leq
-|w|+1:Y_{i}^{i+|w|-1}=w\}$ used in Proposition \ref{prop:skel1}
satisfies this stronger assumption. For skeletons
$\tau^{\sigma}$ as considered in Proposition \ref{prop:skel2}, the
good coalescence time
$\bar{\theta}[0]:=\min\{i\geq0:\frac{1}{i+1}\sum_{j=-i}^{0}Y({\bf
  0})_{j}\ge\sigma\}$ also satisfies the stronger assumption. We now give the proof
  of this theorem, and then give two examples on $A=\{-1,+1\}$, and an application for creation of
  new skeleton that can be used for Theorem \ref{theo2}.\\

\proof The proof of this theorem follows exactly the same steps as the
proof of Theorem \ref{theo2}. To avoid repetition, we just outline the
key observation and leave the proof to the reader. Observe that, if at
time $i$, the random variable $\bar{L}_{i}$ takes value $l$, this
means that we have to look at a portion of the past preceding $B_{i}$
which contains $l$ concatenated contexts of $\tau$. But in the
conditions of the theorem, the blocks themselves are concatenation of
contexts of $\tau$, then, it is still true that we have at least $l$
concatenated contexts contained in the $l$ blocks preceding $B_{i}$.

Therefore, if $\alpha_{k}=\bar{\alpha}_{k}$, the number of blocks
involved in both procedures is the same. Thus, the random variable
$\bar{\Lambda}[0]$ obtained with the new decomposition has essentially
the same tail distribution as $\Lambda[0]$.\\

\begin{example}\emph{For any $\underline{a}\in A^{-\mathbb{N}}$, let 
\[
\mathcal{K}(\underline{a}):=\inf\{i\ge1:a_{-k}=a_{-k-1},\,k\ge i\}
\]
with the convention that $\mathcal{K}(\underline{a})=+\infty$ if the
set is empty. Now let  $P$ be defined by
$$P(1|\underline{a})=\epsilon+\frac{1}{f[\mathcal{K}(\underline{a})]},$$
where $f$ is an unbounded increasing integer valued function satisfying $f(1)\ge\frac{1}{1-2\epsilon}$. This way we get 
$\inf_{\underline{a}}P(-1|\underline{a})=1-\epsilon-\frac{1}{f(1)}\ge\epsilon$.
We
will show that this kernel has discontinuities at every point having
either finitely many $-1$'s or finitely many $1$'s. Let $\mathcal{S}$
denote the set of such points and take $\underline{a}\in\mathcal{S}$. We
have, for any $\underline{z}\in A^{-\mathbb{N}}\setminus\mathcal{S}$,
$P(1|a_{-k}^{-1}\underline{z})=\epsilon$ for any $k$ and
therefore, does not converge to
$\epsilon+\frac{1}{\mathcal{K}(\underline{a})}$. On the other hand,
the kernel is continuous at any $\underline{a}\in
A^{-\mathbb{N}}\setminus\mathcal{S}$, since
$P(1|a_{-k}^{-1}\underline{z})=\epsilon+\frac{1}{\mathcal{K}(a_{-k}^{-1}\underline{z})}$ 
which always converge to $\epsilon$ because
$\mathcal{K}(a_{-k}^{-1}\underline{z})$ converges to $+\infty$ (or
equals infinity when $\underline{z}\in
A^{-\mathbb{N}}\setminus\mathcal{S}$).}

\emph{The main point is that the integer
$\mathcal{K}(\underline{a})$ can never be checked looking at a finite
portion of the past, while both, $\mathcal{L}(\underline{a})$ and
$T_{\sigma}(\underline{a})$ (used to define the kernels of Section \ref{sec:ex}) can be checked looking at a finite portion
of the past of $\underline{a}$. This means that no matter how much we
know of $\underline{a}$, we never make sure that it is indeed
continuous past for $P$.  We now explain why this new kernel satisfies the conditions of Theorem \ref{theo2.5}. First, we
observe that it belongs to extLC($\tilde\tau$) with 
\[
\tilde\tau=\{-\underline{1}\}\cup\bigcup_{i\ge0}\{1(-1)^{i}\}\cup\{\underline{1}\}\cup\bigcup_{i\ge0}\{-11^{i}\}
\]
since for any $k\ge0$ and any set $\{v,v_{1},\ldots,v_{k}\}$ of elements of $\tilde\tau$, we have
\begin{align*}
\sum_{a\in A}\inf_{\underline{z}}P(a|v\,v_{1}\,v_{2}\ldots\,v_{k}\,\underline{z})&=\inf_{\underline{z}}P(1|v\,v_{1}\,v_{2}\ldots\,v_{k}\,\underline{z})+1-\sup_{\underline{z}}P(1|v\,v_{1}\,v_{2}\ldots\,v_{k}\,\underline{z})\\
&=1-\frac{1}{f[|v|+\sum_{i=1}^{k}|v_{i}|]}
\end{align*}
and therefore, $\inf_{v\in\tilde{\tau}}\bar{\alpha}^{v}_{k}=1-\frac{1}{f(2k+2)}$ which converges to $1$ as $k$ diverges. Also, observe that the p.s. $(\tilde{\tau},p)$ satisfies $p(a|v)\ge\epsilon$ for any $a\in A$ and $v\in\tilde{\tau}$, and therefore, fits the conditions of Proposition \ref{prop:skel1}, because $\tilde{\tau}$ has $(-1)1$ and $1(-1)$ as terminal strings. It follows that $\mathbb{E}|\bar{\theta}[0]|\leq 1/\epsilon^{2}$
and thus $$\bar{A}_{k}\ge 1-(1+\frac{1}{\epsilon^{2}})\frac{1}{f(2k+2)}.$$
According to the function $f$ we choose, we can have the three regimes of CFTP specified by Theorem \ref{theo2.5}. }

\end{example}

\begin{example}\label{ex:piccioni1}\emph{This example is taken from  \cite{desantis/piccioni/2012} (Example 1 therein). It is  defined using a sequence of real numbers $\{\theta_{n}\}_{n\ge1}$ such that $\sum_{k\ge1}|\theta_{k}|<1/2$,  a continuous function $f:\mathbb{R}^{+}\rightarrow[0,1]$ decreasing to $0$, and the  quantity
\[
S_{k}(a_{-k}^{-1})=\sum_{i=1}^{k-1}{\bf 1}\{a_{-i}\neq a_{-i-1}\}, 
\]
which counts the number of changes of signal in $a_{-k}^{-1}$. For any $\underline{a}\in A^{-\mathbb{N}}$, let
\[
P(1|\underline{a})=1/2+\sum_{k\ge1}\theta_{k}a_{-k}f(S_{k}(a_{-k}^{-1})).
\]
 Observe that this kernel is somewhat similar to the AR kernel (introduced in Section \ref{sec:CFFexample}) with $\psi=\textrm{Id}$ and $\theta_{0}=1/2$, with the difference that each occurrences of signal changes in the past reduces the dependence due to the multiplicative term in the sum. The same calculation as in the case of AR processes yields
 \[
 \omega_{k}(\underline{a})=1-2f(\beta S_{k}(a_{-k}^{-1}))\sum_{i\ge k+1}|\theta_{i}|.
 \]
This kernel is therefore continuous as for the AR process, with the difference that instead of a multiplicative term $\psi'(c(a_{-k}^{-1}))$ which is bounded away from $0$ and $+\infty$ (see \eqref{eq:1111}),   we now have a term $f(\beta S_{k}(a_{-k}^{-1}))$ which goes to zero for the pasts $\underline{a}$ having infinitely many changes of sign. In other words, these pasts have a faster continuity rate. Uniform continuity may not be useful because it amounts to take into account only the sequence $\{\sum_{i\ge k+1}|\theta_{i}|\}_{k\ge0}$ which may converge too slow to zero. This kernel is also an example in which the notion of  local continuity of Theorem \ref{theo2} cannot be used, since we cannot make sure whether a given past has a finite or infinite number of sign changes looking only at a finite portion. However, it  belongs to extLC($\tilde\tau$), since, for any $k\ge0$ and any set $\{v,v_{1},\ldots,v_{k}\}$ of elements of $\tilde\tau$, denoting $l=|v|+\sum_{i=1}^{k}|v_{i}|$
\begin{align*}
\sum_{a\in A}\inf_{\underline{z}}P(a|v\,v_{1}\,v_{2}\ldots\,v_{k}\,\underline{z})&=\inf_{\underline{z}}P(1|v\,v_{1}\,v_{2}\ldots\,v_{k}\,\underline{z})+1-\sup_{\underline{z}}P(1|v\,v_{1}\,v_{2}\ldots\,v_{k}\,\underline{z})\\
&=1-2f(\beta S_{l}(a_{-l}^{-1}))\sum_{i\ge l+1}|\theta_{i}|.
\end{align*}
Notice that we have $S_{l}(a_{-l}^{-1}))\ge k$, since there is at least $k$ sign changes in the concatenated string $v_{k}\ldots v_{1}v$. Thus, we obtain
\[
\inf_{v}\bar{\alpha}^{v}_{k}\ge 1-2f(\beta k)\sum_{i\ge 2k+3}|\theta_{i}|
\]
and the same calculations as in the preceding example yields
$$\bar{A}_{k}\ge 1-(1+\frac{1}{\epsilon^{2}})2f(\beta k)\sum_{i\ge 2k+3}|\theta_{i}|.$$
Here also, according to the function $f$ we choose, we can have the three regimes of CFTP specified by Theorem \ref{theo2.5}. For instance, the special case of $f(x)=e^{-\beta x}$, $\beta>0, x>0$ yields an exponential tail for the coalescence time of the CFTP (regime (iii) of our theorem).
}\end{example}

\paragraph{{\bf Application of Theorem \ref{theo2.5}}} By analogy with Definition \ref{def:slc}, we can define the class of kernels that satisfy the \emph{extended strong local continuity} with respect to some skeleton $\tau$ (let us denote extSLC($\tau$)). These are such that for any $v\in\leftexp{<\infty}{\tau}$, there exists a positive integer $h(v)$ such that
for any $k\geq h(v)$, $\bar{\alpha}_{k}^{v}=1$. The interesting point is that such kernels, when they satisfy the conditions of items (ii) or (iii) of Theorem \ref{theo2.5}, are good p.s.'s themselves. 

In other words, we have somehow a self-feeding argument, which allows us to construct more complicated good p.s.'s from simpler ones, using Theorem \ref{theo2.5}. And if we can show that $P$ is a good p.s., it can be used as such in Theorem \ref{theo2}.

We now explain why the kernels $P$ belonging to extSLC($\tau$) and satisfying the assumptions of Theorem \ref{theo2.5} are indeed good (see Definition \ref{def:good}). First, we observe
\[
\bar{\zeta}_{l}:={\bf1}\{U_{l}\ge\alpha_{-1}\}\sum_{k\ge0}k.{\bf1}\{U_{l}\in[\bar{\alpha}_{k-1}^{c_{\tau}^{l-1}},\bar{\alpha}_{k}^{c_{\tau}^{l-1}}[\}
\leq {\bf1}\{U_{l}\ge\alpha_{-1}\}h(c^{l-1}_{\tau})=:\bar{\bar{\zeta}}_{l}.
\]
This inequality means that we can use $\{\bar{\bar{\zeta}}_{l}\}_{l\in\mathbb{Z}}$
to obtain another coalescent time $\bar{\bar{\Lambda}}[0]$, defined in
the same way $\bar{\Lambda}[0]$ is defined using $\bar{\zeta}[0]$
(that is, just as we did in the proof of Theorem
\ref{theo2}). Clearly, we will have $\bar{\bar{\Lambda}}[0]\leq
\bar{\Lambda}[0]$, moreover  $\bar{\bar{\Lambda}}[0]$ is a good   coalescence time (see Definition \ref{def:good})
because
$\bar{\bar{\zeta}}_{l}$ is
$\mathcal{F}(Y_{-\infty}^{l})$-measurable. The fact that $P$ is in fact a good
p.s. follows now from the fact that
$\bar{\bar{\Lambda}}[0]$ has finite expectation under the assumptions of items (ii) and (iii) Theorem
\ref{theo2.5}.

\section{Concluding remarks}\label{sec:conclu}

There are several results that follow from {the
  existence of a CFTP scheme and} the regenerative structures.  Among them, {bounds for the
  $\bar{d}$-distance}, {rate of decay of correlations}, concentration inequalities and Functional
Central Limit Theorem.

\begin{itemize}

\item {\bf  Bounds for the $\bar{d}$-distance.} {Given a finite sample, it is natural to use a Markov
  approximation whose transition probabilities can be estimated from a
  sample of the infinite-order chain. A natural candidate would be the
  canonical $k^{\textrm{th}}$-order approximation, which is obtained
  by cutting off the memory after $k$ steps. Bounds on the
  $\bar{d}$-distance can be used to characterize the rate of
  convergence of estimators for stationary processes that can be
  approximated by $k$-steps Markov chains (see
  \cite{csiszar/talata/2010}). \cite{gallo/lerasle/takahashi/2011} use our perfect
  simulation scheme to derive new bounds for the $\bar{d}$-distance
  between the original chain and its canonical $k$-steps Markov
  approximation.}
\end{itemize}

\begin{itemize}
\item {\bf Loss of memory and decay of correlations.}
{CFTP  allows to directly obtain explicit upper bounds for the speed of the loss of memory of the chain, as it has been showed in \cite{comets/fernandez/ferrari/2002}. On the other hand, it is known that the decay of correlations is bounded above by this speed (see Remark 6.2.2 of 
\cite{maillard/2007} 
for instance). Roughly speaking, this means that both, decay of correlations and speed of loss of memory are controlled by the tail distribution of the coalescence time of the CFTP. 
}

\item{\bf Concentration of measures.}
Another direct application of CFTP is the following result on concentration of measures, proved in \cite{gallo/takahashi/2011}. 
\begin{prop} \label{prop:cftpconcentration}
Let ${\bf X}$ be a process that can be simulated by a CFTP algorithm with a coalescence time $\theta$. If $\E[\theta] < \infty$, then for all $\epsilon > 0$ and all functions $f:A^n \to \R$ we have
\begin{equation*} \label{eq:gaussianconcentration}
\P\left(\left|f(X^{n}_{1}) - \E[f(X^n_1)]\right| > \epsilon \right) \leq 2\exp\left\{{-\frac{2\epsilon^2}{(1+\E[\theta])^2
\|\delta f \|^2_{\ell_2(\N)}}} \right\}.
\end{equation*}
\end{prop}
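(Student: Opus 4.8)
The plan is to use the representation, furnished by Theorem \ref{theo2} and Corollary \ref{coro4}, of the whole chain as a deterministic function of the i.i.d.\ innovation sequence $\mathbf{U}$, and then to run a martingale (bounded–differences) argument on this representation. Since each coordinate $X_i=[\Phi(\mathbf{U})]_i$ depends only on the finite window $U_{\theta[i]}^{i}$, the random variable $f(X_1^n)$ is, for fixed $n$, measurable with respect to $\mathcal{F}(U_{-\infty}^{n})$. I would therefore introduce the filtration $\mathcal{F}_j:=\mathcal{F}(U_{-\infty}^{j})$ together with the Doob martingale $M_j:=\E[f(X_1^n)\mid\mathcal{F}_j]$, which satisfies $M_n=f(X_1^n)$ and $M_j\to\E[f(X_1^n)]$ as $j\to-\infty$; this last limit is precisely where almost sure coalescence, i.e.\ the finiteness of each $\theta[i]$, is used. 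One then has the telescoping identity $f(X_1^n)-\E[f(X_1^n)]=\sum_{j\le n}(M_j-M_{j-1})$, and the whole statement reduces to an Azuma--Hoeffding-type estimate for the increments of this martingale.

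The second step is to control each increment $M_j-M_{j-1}$ by the local oscillations of $f$. Resampling the single innovation $U_j$ can alter the output only through those coordinates whose reconstruction window contains $j$, namely the random set $S_j:=\{\,i\in\{1,\dots,n\}:\theta[i]\le j\le i\,\}$; a coupling argument (resample $U_j$ and compare the two reconstructed samples) then bounds the increment by $\sum_{i\in S_j}\delta_i f$, where $\delta_i f$ is the oscillation of $f$ under a change of its $i$-th coordinate, so that $\|\delta f\|_{\ell_2(\N)}^2=\sum_i(\delta_i f)^2$. Summing the squared increments and expanding gives a quadratic form in the vector $(\delta_i f)_i$ whose kernel is the overlap $|\,[\theta[i],i]\cap[\theta[i'],i']\,|$ of reconstruction windows. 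The operator norm of this kernel is governed, in expectation, by the window length, and since by stationarity the expected number of innovations in $[\theta[i],i]$ equals $1+\E[\theta]$, this is exactly the origin of the prefactor $(1+\E[\theta])^2$. Feeding a bound of the form $\sum_j d_j^2\le(1+\E[\theta])^2\|\delta f\|_{\ell_2(\N)}^2$ into the sub-Gaussian martingale inequality produces the claimed $2\exp\{-2\epsilon^2/((1+\E[\theta])^2\|\delta f\|^2_{\ell_2(\N)})\}$.

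The hard part will be that this last step is \emph{not} a literal application of the classical bounded–differences inequality: the reconstruction windows $S_j$, and hence the increment bounds $\sum_{i\in S_j}\delta_i f$, are random and a priori unbounded, whereas Azuma--Hoeffding requires deterministic coefficients. The crux is therefore to replace the worst-case coefficients by quantities that feel only the \emph{expected} coalescence time. I would resolve this by exploiting the regeneration structure produced by the CFTP (Corollary \ref{coro5}): between consecutive regeneration times the reconstructed blocks are independent, the window lengths are dominated by the i.i.d.\ block sizes, and the hypothesis $\E[\theta]<\infty$ supplies exactly the integrability needed to pass from the random per-increment bounds to a genuine sub-Gaussian estimate. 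An alternative, avoiding the explicit block decomposition, is to run the Herbst/entropy argument on the log-Laplace transform $\log\E[e^{\lambda(f-\E f)}]$ and to bound the associated Efron--Stein energy $\sum_j\E[(f-f^{(j)})^2]$, with $f^{(j)}$ the $U_j$-resampled copy, by $(1+\E[\theta])\,\|\delta f\|^2_{\ell_2(\N)}$; the remaining factor $(1+\E[\theta])$ then reappears upon integrating the resulting differential inequality. In either route, the only probabilistic input beyond the deterministic oscillation bound is the finiteness of $\E[\theta]$, exactly as in the statement.
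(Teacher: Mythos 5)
First, a point of reference: the paper does not actually prove Proposition~\ref{prop:cftpconcentration} --- it is quoted from \cite{gallo/takahashi/2011} --- so there is no internal proof to compare against, and I am judging your proposal against the standard argument. Your overall strategy (write $f(X_1^n)$ as a function of the innovations via $\Phi$, form the Doob martingale $M_j=\E[f(X_1^n)\mid\mathcal{F}(U_{-\infty}^{j})]$, telescope, and apply Azuma--Hoeffding) is the right one. The genuine gap is in your third paragraph, where you correctly identify that Azuma--Hoeffding needs deterministic increment bounds but then reach for the wrong tools. A bounded-differences argument over the regeneration blocks of Corollary~\ref{coro5} produces coefficients $\sum_{i\in B_k}\delta_i f$, whose squares bring in the block lengths quadratically; you would need second-moment (or sup) control of the block size, not merely $\E[\theta]<\infty$, and you would not recover the constant $(1+\E[\theta])^2$. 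Likewise, Efron--Stein only yields a variance bound, and the Herbst argument needs a log-Sobolev-type input you have not supplied. Your phrase ``the operator norm of this kernel is governed, in expectation, by the window length'' is the symptom: operator norms of random kernels are not controlled by expectations.

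The missing step is a measurability observation that makes the coefficients deterministic with no extra machinery. Write $M_j-M_{j-1}$ as an average over $U_{j+1}^{n}$ (and over an independently resampled value $u$ of $U_j$) of $f(X)-f(X')$, where $X,X'$ are the reconstructions from two innovation sequences differing only at position $j$. Then $\{X_i\neq X_i'\}\subseteq\{\theta[i]\le j\}$, and by \eqref{eq:theta} the event $\{\theta[i]>j\}$ is a union over $j'\in\{j+1,\dots,i\}$ of events measurable with respect to $\sigma(U_{j'}^{i})$; hence $\{\theta[i]\le j\}$ is $\sigma(U_{j+1}^{i})$-measurable --- it involves exactly the variables being averaged out and none of those being conditioned on. Consequently $|M_j-M_{j-1}|\le d_j:=\sum_{i=1\vee j}^{n}\delta_i f\,\P\left(|\theta[0]|\ge i-j\right)$ by translation invariance, a deterministic Toeplitz convolution. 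Young's inequality gives $\sum_j d_j^2\le\bigl(\sum_{m\ge0}\P(|\theta[0]|\ge m)\bigr)^2\|\delta f\|_{\ell_2(\N)}^2=(1+\E|\theta[0]|)^2\|\delta f\|_{\ell_2(\N)}^2$, and Azuma--Hoeffding in oscillation form yields exactly the stated bound. So the only probabilistic input is indeed $\E[\theta]<\infty$, but it enters through the $\ell^1$-norm of the deterministic kernel $m\mapsto\P(|\theta[0]|\ge m)$, not through regeneration or entropy methods.
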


\item {\bf Functional Central Limit Theorem.}
Under assumptions ensuring that the coalescence time of time $0$ has summable tail  (basically the number of steps that have to be performed by the algorithm in order to construct the stationary chain at time $0$), the constructed chain has a regeneration scheme. Such structure has been already observed under stronger assumptions using continuity, by for example, \cite{lalley/1986}, but also more recently in \cite{comets/fernandez/ferrari/2002} and \cite{gallo/2009}. It is worth mentioning that using this fact, a Functional Central Limit Theorem could be derived for our chains, as it has been done by \cite{maillard/schopfer/2008} under the continuity assumption. This is because, looking at their proof, we observe that it uses the regeneration property of the measure, and not the form of the conditional probabilities of the chain. 

\end{itemize}

\paragraph{Acknowledgments}
This work is part of USP Project ``Mathematics, computation, language
and the brain''.  SG was supported by a FAPESP fellowship (grant 2009/09809-1). 
  NG is supported by CNPq grants 475504/2008-9, 302755/2010-1 and
  476764/2010-6. We thank the anonymous referees for valuable remarks
  that greatly improved the presentation of this work. 

\bibliographystyle{jtbnew}
\bibliography{sandro_bibli}

\end{document}